\begin{document}

\newtheorem{Definition}{Definition}
\newtheorem{Theorem}[Definition]{Theorem}
\newtheorem{Lemma}[Definition]{Lemma}
\newtheorem{Corollary}[Definition]{Corollary}
\newtheorem{Example}[Definition]{Example}
\newtheorem{Remark}[Definition]{Remark}
\newtheorem{Notation}[Definition]{Notation}
\newtheorem{Proposition}[Definition]{Proposition}
\renewcommand{\algorithmicrequire}{\textbf{Input:}}
\renewcommand{\algorithmicensure}{\textbf{Output:}}
\newcommand{\Ext}{{\mathop{\mathrm{Ext}}}}
\newcommand{\Ann}{{\mathop{\mathrm{ann}}}}
\newcommand{\Ass}{{\mathop{\mathrm{Ass}}}}
\newcommand{\hull}{{\mathop{\mathrm{hull}}}}
\newcommand{\codim}{{\mathop{\mathrm{codim}}}}
\newcommand{\aminor}{{\mathop{\mathrm{aminor}}}}


\title{Effective Localization Using Double Ideal Quotient and Its Implementation} 

\author{Yuki Ishihara\thanks{Graduate School of Science, Rikkyo University 3-34-1 Nishi-Ikebukuro, Toshima-ku, Tokyo, Japan 171-8501, yishihara@rikkyo.ac.jp} \and Kazuhiro Yokoyama\thanks{Department of Mathematics, Rikkyo University 3-34-1 Nishi-Ikebukuro, Toshima-ku, Tokyo, Japan 171-8501, kazuhiro@rikkyo.ac.jp}}
\date{}
\maketitle


\begin{abstract}
In this paper, we propose a new method for localization of polynomial ideal, which we call "Local Primary Algorithm". For an ideal $I$ and a prime ideal $P$, our method computes a $P$-primary component of $I$ after checking if $P$ is associated with $I$ by using {\it double ideal quotient} $(I:(I:P))$ and its variants which give us a lot of information about localization of $I$.  
\end{abstract}


\section{Introduction}\label{sec-1}
In commutative algebra, the operation of “localization by a prime ideal” is well-known as a basic tool. To realize it on computer algebra systems, we propose new effective localization using {\it double ideal quotient} (DIQ) and its variants for ideals, in a polynomial ring over a field. Here, by the words {\em localization}, we mean the saturation or the contraction of localized ideals. 

It is well-known that localization of ideal can be computed through its primary decomposition. In more detail, for an ideal $I$ of a polynomial ring $K[X]=K[x_1,\ldots,x_n]$ over a field $K$ and a multiplicatively closed set $S$ in $K[X]$, once a primary decomposition $\mathcal{Q}$ of $I$ is known, the localization (i.e. the contraction of localized ideal) of $I$ by $S$ can be computed by $IK[X]_S\cap K[X]=\bigcap_{Q\in \mathcal{Q},Q\cap S=\emptyset } Q$ (see Remark \ref{rm:locpri}). Algorithms of primary decomposition have been much studied, for example, by  \cite{GIANNI1988149},  \cite{Eisenbud1992}, \cite{SHIMOYAMA1996247} and \cite{KAWAZOE20111158}. However, in practice, as such primary decomposition tends to be very time-consuming, use of primary decomposition is not an efficient way and we need an efficient {\em direct} method without primary decomposition. Toward a direct method of localization, for a given ideal $I$ and a prime ideal $P$, first we provide several criteria for checking if a primary ideal $Q$ can be a $P$-primary component of $I$, and then present a direct method named {\em Local Primary Algorithm} (LPA) which computes a $P$-primary component of $I$. Our method applies different procedures for two cases; isolated and embedded. Both cases use {\it double ideal quotient and its variants} as a tool for generating and checking primary components. Of course, if we know all associated primes disjoint from a multiplicatively closed set, we get its localization without computing other primary components. 

For ideals $I$ and $J$, we call an ideal $(I:(I:J))$ {\it double ideal quotient} in the paper. Double ideal quotient appears in \cite{vasconcelos2004computational} to check associated primes or compute equidimensional hull, and in \cite{Eisenbud1992}, to compute equidimensional radical. We survey other properties of double ideal quotient and find that it and its variants have useful information about localization. For instance, for ideals $I$, $J$ and a primary decomposition $\mathcal{Q}$ of $I$, a variant of DIQ $(I:(I:J)^{\infty})$ coincides with $\bigcap_{Q\in \mathcal{Q},J\subset IK[X]_{\sqrt{Q}}\cap K[X]} Q$. 

To check the practicality of criteria on LPA , we made an implementation on the computer algebra system Risa/Asir \cite{risaasir} and demonstrate the performance in several examples. To evaluate effectiveness coming from its speciality, we compare timings of it to ones of a general algorithm of primary decomposition in Risa/Asir. 

For practical implements we devise several efficient techniques for improving our LPA. (For efficient computation of ideal quotient and saturation, see \cite{vasconcelos2004computational} and \cite{greuel2002singular}). First, instead of computing the {\em equidimensional hull} $\hull (I+P^m)$, we use $\hull (I+P_G^{[m]})$ where $P_G^{[m]}=(f_1^m,\ldots,f_r^m)$ for some generator $G=\{f_1,\ldots,f_r\}$ of $P$. Second, we use {\em a maximal independent set} of $P$ for computing  $\hull (\overline{Q})$ where $\overline{Q}$ is a $P$-hull-primary ideal. Since a maximal independent set $U$ of $P$ is one of $I+P^m$, we obtain $\hull (I+P^m)=(I+P^m)K[X]_{K[U]^{\times}}\cap K[X]$. Moreover, we also use $U$ at the first step of LPA; use $IK[X]_{K[U]^{\times}}\cap K[X]$ instead of $I$.  By these efficient techniques, our experiment shows certain practicality of our direct localization method.


\section{Mathematical Basis}\label{sec-2}
Throughout this paper, we denote a polynomial ring $K[x_1,\ldots,x_n]$ by $K[X]$, where $K$ is a computable field (e.g. the rational field $\mathbb{Q}$ or a finite field $\mathbb{F}_p$) and we denote the set of variables $\{x_1,\ldots,x_n\}$ by $X$. We write $(f_1,\ldots,f_t)_{K[X]}$ for the ideal generated by elements $f_1,\ldots,f_t$ in $K[X]$. If the ring is obvious, we simply use $(f_1,\ldots,f_t)$. When we simply say $I$ is an ideal, it means the $I$ is an ideal of $K[X]$. Moreover, we denote the radical of $I$ by $\sqrt{I}$. 

\subsection{Definition of Primary Decomposition and Localization}

Here we give the definition of primary decomposition and that of localization which seem slightly different from {\em standard} ones. We also give fundamental notions and properties related to localization.

\begin{Definition}
	Let $I$ be an ideal of $K[X]$. A set $\mathcal{Q}$ of primary ideals is called a general primary decomposition of $I$ if $I=\bigcap_{Q\in \mathcal{Q}} Q$. A general primary decomposition $\mathcal{Q}$ is called a primary decomposition of $I$ if the decomposition $I=\bigcap_{Q\in \mathcal{Q}} Q$ is an irredundant decomposition. For a primary decomposition of I, each primary ideal is called a primary component of $I$. The prime ideal associated with a primary component of $I$ is called a prime divisor of $I$ and among all prime divisors, minimal prime ideals are called isolated prime divisors of $I$ and others are called embedded prime divisors of $I$. A primary component of $I$ is called isolated if its prime divisor is isolated and embedded if its prime divisor is embedded. We denote by $\Ass(I)$ and $\Ass_{iso} (I)$ the set of all prime divisors of $I$ and the set of all isolated prime divisors respectively. 
\end{Definition}

\begin{Definition}
	Let $I$ be an ideal of $K[X]$ and $S$ a multiplicatively closed set in $K[X]$. We denote the set $\{f\in K[X] \mid fs \in I \text{ for some } s\in  S\}$  by $IK[X]_S\cap K[X]$, and call it the localization of $I$ with respect to $S$. For a multiplicatively closed set $K[X]\setminus P$, where $P$ is a prime ideal, we denote simply by $IK[X]_P\cap K[X]$. We assume a multiplicatively closed set $S$ always does not contain $0$. 
\end{Definition}
	
	\begin{Remark} \label{rm:locpri}
		Given a primary decomposition $\mathcal{Q}$ of an ideal $I$, the localization of $I$ by $S$ is expressed as $\bigcap_{Q\in \mathcal{Q},Q\cap S=\emptyset } Q$. Moreover, it is also equal to $(I:(\bigcap_{P\in \Ass (I),P\cap S\neq \emptyset } P)^{\infty}) $. Thus if we know all primary components or all associated primes, then we can compute localizations of $I$ for any {\em computable} multiplicatively closed sets $S$. (We are thinking mainly about  cases where $S$ is finitely generated or the complement of a prime ideal. In these cases, we can decide efficiently whether $Q$ and $S$ intersect or not). However, this method is not a direct method since it computes unnecessary primary components or associated primes. 
	\end{Remark}
	
	\begin{Lemma}\label{prilocal}
		Let $I$ be an ideal and P a prime divisor of $I$. If $S$ is a multiplicatively closed set with $P\cap S=\emptyset $ and $Q$ is a $P$-primary ideal, then the following conditions are equivalent. 
		
			$(A)$ $Q$ is a primary component of $I$

			$(B)$ $Q$ is a primary component of $IK[X]_S\cap K[X]$
	\end{Lemma}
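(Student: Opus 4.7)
The plan is to prove both directions by passing between irredundant primary decompositions of $I$ and $I_S := IK[X]_S \cap K[X]$ via Remark~\ref{rm:locpri}. Fix any irredundant primary decomposition $I = \bigcap_j Q_j$ and partition its components into $\mathcal{Q}_{\text{in}} = \{Q_j : \sqrt{Q_j}\cap S = \emptyset\}$ and $\mathcal{Q}_{\text{out}} = \{Q_j : \sqrt{Q_j}\cap S \neq \emptyset\}$. A preliminary observation I need is that $I_S = \bigcap_{Q_j \in \mathcal{Q}_{\text{in}}} Q_j$ is itself an irredundant primary decomposition of $I_S$: the equality is Remark~\ref{rm:locpri}, and any witness $f \in \bigcap_{k \neq k_0} Q_k \setminus Q_{k_0}$ (for $k_0$ with $Q_{k_0}\in \mathcal{Q}_{\text{in}}$) to non-superfluity in the decomposition of $I$ also witnesses non-superfluity of $Q_{k_0}$ in the smaller decomposition of $I_S$.

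For direction (A) $\Rightarrow$ (B), I would choose an irredundant primary decomposition of $I$ containing $Q$ (possible since $Q$ is a primary component of $I$), then discard the $\mathcal{Q}_{\text{out}}$-components; by the preliminary observation, what remains is an irredundant primary decomposition of $I_S$ still containing $Q$, so $Q$ is a $P$-primary component of $I_S$.

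Direction (B) $\Rightarrow$ (A) is the main work. Start with an irredundant primary decomposition $I_S = Q \cap \bigcap_i Q'_i$ and form the candidate
\[
I' := Q \cap \bigcap_i Q'_i \cap \bigcap_{Q_j \in \mathcal{Q}_{\text{out}}} Q_j.
\]
Step one is to verify $I = I'$: the inclusion $I \subseteq I'$ is clear, and for $f \in I'$ one picks $s \in S$ with $sf \in I$ (using $f \in I_S$), observes $s \notin \sqrt{Q_j}$ for every $Q_j \in \mathcal{Q}_{\text{in}}$, and concludes $f \in Q_j$ from primary-ness and $sf \in Q_j$; combined with $f \in Q_j$ for $Q_j \in \mathcal{Q}_{\text{out}}$, this gives $f \in I$.

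Step two, which I expect to be the main obstacle, is to show that $Q$ is not superfluous in $I'$. If it were, taking the product $s = \prod_{Q_j \in \mathcal{Q}_{\text{out}}} s_j$ of witnesses $s_j \in Q_j \cap S$ and applying primary-ness of $Q$ (with $s \notin P$ since $P \cap S = \emptyset$) to an arbitrary $f \in \bigcap_i Q'_i$ would yield $\bigcap_i Q'_i \subseteq Q$, contradicting the irredundancy of the decomposition $I_S = Q \cap \bigcap_i Q'_i$. Once $Q$ is non-superfluous in $I'$, I would strip away any other superfluous components one by one; removing a component only enlarges the intersection of the remaining components other than $Q$, so $Q$ stays non-superfluous throughout, and the surviving components form an irredundant primary decomposition of $I$ containing $Q$.
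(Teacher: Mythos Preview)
Your proof is correct and follows essentially the same route as the paper: for $(B)\Rightarrow(A)$ both arguments take an irredundant decomposition of $I_S$ containing $Q$ and adjoin to it the components of a fixed decomposition of $I$ that meet $S$. The paper simply asserts that this union is a primary decomposition of $I$ (and cites Atiyah--MacDonald, Proposition~4.9 for $(A)\Rightarrow(B)$), while you additionally spell out why the intersection equals $I$ and why $Q$ remains non-superfluous.
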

	
	\begin{proof}
		First, $(A)$ implies $(B)$ from Proposition 4.9 in \cite{atiyah1994introduction} . For primary decompositions $\mathcal{Q}$ of $I$ and $\mathcal{Q}^{\prime}$ of $IK[X]_S\cap K[X]$ with $Q\in \mathcal{Q}^{\prime}$, we obtain $\{Q^{\prime} \in \mathcal{Q}\mid Q^{\prime}\cap S\neq \emptyset \}\cup \mathcal{Q}^{\prime}$ is also a primary decomposition of $I$. Hence, $(B)$ implies $(A)$. 
	\end{proof}
	
		\begin{Definition}[\cite{atiyah1994introduction}, Chapter 4]
		Let $I$ be an ideal. A subset $\mathcal{P}$ of $\Ass (I)$ is said to be {\rm isolated} if it satisfies the following condition: for a prime divisor $P^{\prime}\in \Ass (I)$, if $P^{\prime}\subset P$ for some $P\in \mathcal{P}$, then $P^{\prime}\in \mathcal{P}$. 
	\end{Definition}
	
	\begin{Lemma}[\cite{atiyah1994introduction}, Theorem 4.10] \label{isomul}
		Let $I$ be an ideal and $\mathcal{P}$ an isolated set contained in $\Ass (I)$. For a multiplicatively closed set $S=K[X]\setminus \bigcup_{P\in \mathcal{P}} P$ and a primary decomposition $\mathcal{Q}$ of $I$, $IK[X]_S\cap K[X]=\bigcap_{Q\in \mathcal{Q},\sqrt{Q}\in \mathcal{P}} Q$. 
	
	\end{Lemma}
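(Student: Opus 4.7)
The plan is to reduce the statement to a membership criterion for the primary components $Q\in\mathcal{Q}$ and then analyze that criterion via prime avoidance together with the hypothesis that $\mathcal{P}$ is isolated. By Remark \ref{rm:locpri}, the localization $IK[X]_S\cap K[X]$ is already expressible as $\bigcap_{Q\in\mathcal{Q},\,Q\cap S=\emptyset} Q$, so the lemma is equivalent to the assertion that for every $Q\in\mathcal{Q}$ one has $Q\cap S=\emptyset$ if and only if $\sqrt{Q}\in\mathcal{P}$.

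The ``if'' direction is immediate: when $\sqrt{Q}\in\mathcal{P}$, every element of $Q$ lies in $\sqrt{Q}\subseteq\bigcup_{P\in\mathcal{P}}P$, and hence avoids $S=K[X]\setminus\bigcup_{P\in\mathcal{P}}P$. The ``only if'' direction is where the isolated hypothesis is used. Assuming $Q\cap S=\emptyset$, every element of $Q$ lies in $\bigcup_{P\in\mathcal{P}}P$, so $Q\subseteq\bigcup_{P\in\mathcal{P}}P$. Because $\mathcal{P}\subseteq\Ass(I)$ and $\Ass(I)$ is finite (Noetherianity of $K[X]$), the union is a finite union of prime ideals, and prime avoidance yields an index $P_0\in\mathcal{P}$ with $Q\subseteq P_0$. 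Taking radicals gives $\sqrt{Q}\subseteq P_0$, and since $\sqrt{Q}\in\Ass(I)$ and $\mathcal{P}$ is isolated, we conclude $\sqrt{Q}\in\mathcal{P}$.

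Combining the two directions with Remark \ref{rm:locpri} gives the stated equality. The only delicate step is invoking prime avoidance, which requires only the finiteness of $\mathcal{P}$ (guaranteed by $\Ass(I)$ being finite) and the primeness of each $P\in\mathcal{P}$; neither is a real obstacle, so the proof is short once the problem is phrased as the equivalence above.
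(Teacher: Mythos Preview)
Your proof is correct and is essentially the standard argument: reduce to the equivalence $Q\cap S=\emptyset \iff \sqrt{Q}\in\mathcal{P}$, then use prime avoidance (Lemma~\ref{primeavoid}(i)) together with the isolated hypothesis for the nontrivial direction. The paper itself does not supply a proof for this lemma---it simply cites \cite[Theorem~4.10]{atiyah1994introduction}---and your argument matches the classical proof found there.
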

	
	\begin{Lemma} \label{mulp}
		Let $\mathcal{Q}$ be a primary decomposition of $I$ and $Q\in \mathcal{Q}$. For a multiplicatively closed set $S$, the following conditions are equivalent. 
		
			$(A)$  $IK[X]_S\cap K[X] \subset IK[X]_{\sqrt{Q}}\cap K[X]$.
			
			$(B)$ $Q\cap S=\emptyset$.
	\end{Lemma}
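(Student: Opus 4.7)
The plan is to reduce both sides to the intersection formula from Remark \ref{rm:locpri}, which says that for a multiplicatively closed set $T$ and primary decomposition $\mathcal{Q}$ of $I$,
\[ IK[X]_T\cap K[X]=\bigcap_{Q'\in\mathcal{Q},\,Q'\cap T=\emptyset} Q'. \]
Applying this with $T=K[X]\setminus\sqrt{Q}$ and using $Q'\cap(K[X]\setminus\sqrt{Q})=\emptyset\iff Q'\subset\sqrt{Q}\iff\sqrt{Q'}\subset\sqrt{Q}$, I get
\[ IK[X]_{\sqrt{Q}}\cap K[X]=\bigcap_{Q'\in\mathcal{Q},\,\sqrt{Q'}\subset\sqrt{Q}} Q', \]
which in particular is contained in $Q$.

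For $(B)\Rightarrow(A)$, I first observe $Q\cap S=\emptyset\iff\sqrt{Q}\cap S=\emptyset$: if $s\in\sqrt{Q}\cap S$ then some power $s^n\in Q$, and $s^n\in S$ since $S$ is multiplicatively closed. Thus $Q\cap S=\emptyset$ gives $S\subset K[X]\setminus\sqrt{Q}$, so any $f$ with $fs\in I$ for some $s\in S$ also has $fs\in I$ for some $s\in K[X]\setminus\sqrt{Q}$; this is just the inclusion $(A)$ at the level of the defining sets.

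For $(A)\Rightarrow(B)$ I argue the contrapositive: assume $Q\cap S\neq\emptyset$. Then $Q$ is not one of the terms in the intersection expressing $IK[X]_S\cap K[X]$, so
\[ \bigcap_{Q'\in\mathcal{Q},\,Q'\neq Q} Q' \ \subset\ IK[X]_S\cap K[X]. \]
By irredundancy of $\mathcal{Q}$, the left-hand side is strictly larger than $I$, and more precisely it is not contained in $Q$: there exists $f\in\bigcap_{Q'\neq Q} Q'$ with $f\notin Q$. Such an $f$ lies in $IK[X]_S\cap K[X]$ but, because $IK[X]_{\sqrt{Q}}\cap K[X]\subset Q$, it does not lie in $IK[X]_{\sqrt{Q}}\cap K[X]$, contradicting $(A)$.

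The only delicate point is ensuring the correct use of irredundancy of $\mathcal{Q}$ to produce the witness $f\in\bigcap_{Q'\neq Q}Q'\setminus Q$; once that is in hand the two directions follow directly from the intersection description of localization. I expect no further obstacles.
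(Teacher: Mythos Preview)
Your proof is correct and follows essentially the same route as the paper: both directions rest on the intersection formula from Remark~\ref{rm:locpri}, the containment $IK[X]_{\sqrt{Q}}\cap K[X]\subset Q$, and irredundancy of $\mathcal{Q}$. Your contrapositive for $(A)\Rightarrow(B)$, producing an explicit witness $f\in\bigcap_{Q'\neq Q}Q'\setminus Q$, is a slightly more transparent packaging of the same irredundancy argument the paper uses.
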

	\begin{proof}
			Show $(A)$ implies $(B)$. As $IK[X]_{\sqrt{Q}}\cap K[X]\subset Q$, $IK[X]_S\cap K[X]=\bigcap_{Q^{\prime}\in \mathcal{Q}, Q^{\prime}\cap S=\emptyset } Q^{\prime}\subset Q$. Since $\mathcal{Q}$ is irredundant,  $IK[X]_S\cap K[X]$ has $\sqrt{Q}$-primary component. Thus, $Q\cap S=\emptyset $. Now, we show $(B)$ implies $(A)$. Then, $\sqrt{Q}\cap S=\emptyset $ and $Q^{\prime}\cap S=\emptyset $ for any $Q^{\prime}\in \mathcal{Q}$ s.t. $Q^{\prime}\subset \sqrt{Q}$. Thus, $IK[X]_{\sqrt{Q}}\cap K[X]=\bigcap_{Q^{\prime}\subset \sqrt{Q}} Q^{\prime}$ implies $IK[X]_S\cap K[X] \subset IK[X]_{\sqrt{Q}}\cap K[X]$. 
	\end{proof}
	
	Next we introduce the notion of pseudo-primary ideal.
	
	\begin{Definition} \label{pseudoprimary}
		Let $Q$ be an ideal. We say $Q$ is {\em pseudo-primary} if $\sqrt{Q}$ is a prime ideal. In this case, we also say $\sqrt{Q}$-pseudo-primary. 
		
	\end{Definition}
	\begin{Definition}	
		Let $I$ be an ideal and $P$ an isolated prime divisor of $I$. For $\mathcal{P}=\{P^{\prime}\in \Ass (I)\mid P$ is the unique isolated prime divisor contained in $P^{\prime}\}$ and $S=K[X]\setminus \bigcup_{P^{\prime}\in \mathcal{P}} P^{\prime}$, we call $\overline{Q}=IK[X]_S\cap K[X]$ the $P$-pseudo-primary component of $I$. This definition is consistent with one in \cite{SHIMOYAMA1996247}. We note that the $P$-pseudo-primary component is determined uniquely and has the $P$-isolated primary component of $I$ as component. 
	\end{Definition}

	\begin{Remark}
		Every $P$-pseudo-primary component of $I$ is a $P$-pseudo-primary ideal. Let $\overline{Q}_P$ is the $P$-pseudo-primary component of $I$. Then $I=\bigcap_{P\in \Ass_{iso} (I)} \overline{Q}_P$ $\cap I^{\prime}$ for some $I^{\prime}$ s.t. $\Ass_{iso} (I^{\prime})\cap \Ass_{iso} (I)=\emptyset $. This decomposition is called a pseudo-primary decomposition in \cite{SHIMOYAMA1996247}, where it is computed by separators from given $\Ass_{iso} (I)$. Meanwhile, we introduce another method to compute it by using double ideal quotient in Lemma \ref{satpseudo}. 
	\end{Remark}
	
	\begin{Definition}
		Let $I$ be an ideal and $\mathcal{Q}$ a primary decomposition of $I$. We call $\hull (I)=\bigcap_{Q\in \mathcal{Q},\dim (Q)=\dim (I)} Q$ the equidimensional hull of $I$. Since every primary component $Q$ satisfying $\dim (Q)=\dim (I)$ is isolated, $\hull (I)$ is determined independently from choice of primary decompositions. 
	\end{Definition}
	
	For a given $I$, $\hull (I)$ can be computed in several manners. For instance, it can be computed by $\Ext$ functors \cite{Eisenbud1992} or a regular sequence contained in $I$ \cite{vasconcelos2004computational}.  
	
	\begin{Proposition}[\cite{Eisenbud1992}, Theorem 1.1. \cite{vasconcelos2004computational}, Proposition 3.41] \label{comhull}
		Let $I$ be an ideal and $u\subset I$ be a $c$-length regular sequence, where $c$ is the codimension of $I$. Then $\hull (I)=((u):((u):I))=\Ann_{K[X]} (\Ext_{K[X]} ^{c} (K[X]/I,K[X]))$. 
	\end{Proposition}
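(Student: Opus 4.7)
The plan is to prove both equalities by combining a Koszul/Ext computation with a localization argument at the codimension-$c$ minimal primes of $I$. I would first establish the isomorphism $\Ext^c_{K[X]}(K[X]/I, K[X]) \cong ((u):I)/(u)$; taking annihilators of both sides then delivers the second equality $\Ann_{K[X]} \Ext^c(K[X]/I, K[X]) = ((u):((u):I))$ directly.

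Because $u = u_1,\ldots,u_c$ is a regular sequence, the Koszul complex on $u$ is a free resolution of $K[X]/(u)$ of length $c$, so $\Ext^i_{K[X]}(K[X]/(u), K[X]) = 0$ for $i \neq c$ and $\Ext^c_{K[X]}(K[X]/(u), K[X]) \cong K[X]/(u)$. The change-of-rings spectral sequence for $K[X] \to K[X]/(u)$ then collapses to
\[ \Ext^c_{K[X]}(M, K[X]) \cong \mathrm{Hom}_{K[X]/(u)}(M, K[X]/(u)) \]
for any $K[X]/(u)$-module $M$. Taking $M = K[X]/I$ (a $K[X]/(u)$-module since $(u) \subset I$) and unwrapping the Hom as $\{\bar f \in K[X]/(u) : I \bar f = 0\} = ((u):I)/(u)$ proves the Ext isomorphism.

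To match this common ideal with $\hull(I)$ I would localize. The module $N := ((u):I)/(u)$ is a submodule of the Cohen-Macaulay unmixed module $K[X]/(u)$, so every associated prime of $N$ has codimension exactly $c$; a codim-$c$ minimal prime $P$ of $(u)$ belongs to $\Ass(N)$ iff $I \subset P$, equivalently iff $P$ is a codim-$c$ minimal prime of $I$. Via the embedding $K[X]/\Ann(N) \hookrightarrow N^r$, the ideal $((u):((u):I))$ inherits the same (purely minimal) set of associated primes. At any such $P$, $K[X]_P$ is a regular local ring of dimension $c$, $(u)_P$ is a parameter ideal, and $K[X]_P/(u)_P$ is Artinian Gorenstein; the double-annihilator property of this ring yields $((u):((u):I))_P = I_P$. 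Since $((u):((u):I))$ is unmixed with associated primes exactly the codim-$c$ minimal primes of $I$, reassembling its primary decomposition from these local pieces gives $\bigcap_P (I_P \cap K[X]) = \hull(I)$.

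The main obstacle is justifying the two duality identifications, namely the change-of-rings isomorphism $\Ext^c_{K[X]}(M, K[X]) \cong \mathrm{Hom}_{K[X]/(u)}(M, K[X]/(u))$ and the Matlis/Gorenstein double-annihilator equality in $K[X]_P/(u)_P$; both are standard but require careful handling of the Koszul resolution and of local duality over a regular local ring. Once these are in place the remaining work is routine associated-prime bookkeeping.
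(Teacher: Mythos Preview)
The paper does not prove this proposition; it is stated as a cited result from \cite{Eisenbud1992} and \cite{vasconcelos2004computational} with no accompanying argument. So there is nothing in the paper to compare your proposal against.

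That said, your outline is essentially the standard proof found in those references. The identification $\Ext^c_{K[X]}(K[X]/I,K[X])\cong ((u):I)/(u)$ via the Koszul resolution and the change-of-rings isomorphism is correct, and taking annihilators immediately gives $((u):((u):I))=\Ann_{K[X]}\Ext^c(K[X]/I,K[X])$. The identification with $\hull(I)$ via the Artinian Gorenstein double-annihilator property at each codimension-$c$ minimal prime is exactly the argument in \cite{vasconcelos2004computational}, Proposition~3.41. Your bookkeeping on associated primes---that $((u):I)/(u)$, being a submodule of the unmixed module $K[X]/(u)$, has only codimension-$c$ associated primes, and that these are precisely the codimension-$c$ minimal primes of $I$---is correct and suffices to reassemble the primary decomposition from the local data. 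The two ``obstacles'' you flag (the change-of-rings isomorphism and the double-annihilator property in a zero-dimensional Gorenstein local ring) are indeed the substantive inputs, but both are standard and exactly the ingredients used in the cited sources.
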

	
	\begin{Definition}
		Let $I$ be an ideal. We say that $I$ is {\em hull-primary} if $\hull (I)$ is a primary ideal. For a prime ideal $P$, we say a hull-primary ideal $I$ is $P$-hull-primary if $P=\hull (\sqrt{I})$. 
	\end{Definition}
	
	Since a pseudo-primary ideal has the unique isolated component, we obtain the following remark. 
	
	\begin{Remark} \label{hull-prima-rmk}
		A pseudo-primary ideal is hull-primary.  
	\end{Remark}
	
	By the definition of the $P$-pseudo-primary component of $I$, it is easy to prove the following lemma. 
		
	\begin{Lemma} \label{psuedo-primary-component}
		Let $P$ be an isolated prime divisor of $I$ and $\overline{Q}$ a $P$-pseudo-primary component of $I$. Then, $\overline{Q}$ is a $P$-hull-primary and $\hull (\overline{Q})$ is the isolated $P$-primary component of $I$. 
	\end{Lemma}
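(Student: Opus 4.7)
The plan is to combine Lemma \ref{isomul} with the fact that $P$ is the unique minimal element of the index set $\mathcal{P}$, then invoke Remark \ref{hull-prima-rmk}.

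First I would verify that $\mathcal{P}=\{P'\in \Ass(I)\mid P\text{ is the unique isolated prime divisor contained in }P'\}$ is actually an isolated subset of $\Ass(I)$ in the sense of the preceding definition. Suppose $P''\in \Ass(I)$ satisfies $P''\subset P'$ for some $P'\in \mathcal{P}$. Every associated prime contains at least one isolated prime divisor; call one such $P^\ast\subset P''$. Then $P^\ast\subset P'$, so by the defining property of $\mathcal{P}$, $P^\ast=P$. Hence $P\subset P''$, and the same argument shows $P$ is the only isolated prime divisor of $I$ contained in $P''$, so $P''\in \mathcal{P}$.

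Next, fix an irredundant primary decomposition $\mathcal{Q}$ of $I$. Lemma \ref{isomul} yields
\[
\overline{Q}=IK[X]_{S}\cap K[X]=\bigcap_{Q\in \mathcal{Q},\ \sqrt{Q}\in \mathcal{P}} Q,
\]
and irredundancy is inherited from $\mathcal{Q}$ (an element separating $Q$ from the rest of $\mathcal{Q}$ still separates it from the smaller subfamily). Taking radicals gives $\sqrt{\overline{Q}}=\bigcap_{P'\in \mathcal{P}} P'$; since $P\in \mathcal{P}$ and every $P'\in \mathcal{P}$ contains $P$ (as an isolated prime divisor it contains), this intersection equals $P$. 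Thus $\overline{Q}$ is $P$-pseudo-primary, and by Remark \ref{hull-prima-rmk} it is $P$-hull-primary, giving the first claim.

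For the second claim, I would argue about dimensions in the decomposition above. For every $P'\in \mathcal{P}\setminus\{P\}$ we have $P\subsetneq P'$ (proper inclusion, because if $P'=P$ then $P'\in \{P\}$, and $P\subset P'$ was shown above), hence $\dim(P')<\dim(P)$. So the unique primary component of maximal dimension in the decomposition of $\overline{Q}$ is the $P$-primary one, which is the $P$-primary component of $I$ appearing in $\mathcal{Q}$. Since this component has radical $P$ (isolated, hence unique), it is the isolated $P$-primary component of $I$, and it is precisely $\hull(\overline{Q})$. The only delicate point in the whole argument is the verification that $\mathcal{P}$ is isolated, which is needed to legitimately invoke Lemma \ref{isomul}; everything else is bookkeeping.
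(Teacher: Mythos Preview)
Your proof is correct. The paper does not actually write out a proof of this lemma, noting only that it ``is easy to prove'' from the definition of the $P$-pseudo-primary component; your argument is precisely the natural unpacking of that remark, verifying that $\mathcal{P}$ is isolated so that Lemma~\ref{isomul} applies, then reading off the radical and the top-dimensional piece, and invoking Remark~\ref{hull-prima-rmk}.
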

	
	Using Lemma \ref{psuedo-primary-component} and a variant of {\em double ideal quotient}, we generate the isolated $P$-primary component of $I$ in Section 5. 
	
		\begin{Lemma} \label{primaryavoid}
		Let $Q$ be a primary ideal. Let $I$ and $J$ be ideals. If $IJ\subset Q$ and $J\not \subset \sqrt{Q}$, then $I\subset Q$. In particular, if $I\cap J\subset Q$ and $J\not \subset \sqrt{Q}$, then $I\subset Q$. 
		
	\end{Lemma}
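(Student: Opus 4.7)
The plan is to reduce both claims to the defining property of a primary ideal: if $ab \in Q$ and $b \notin \sqrt{Q}$, then $a \in Q$. The statement is essentially a two-ideal upgrade of this single-element property, so the strategy is simply to find, for each $f \in I$, a witness in $J$ that lies outside $\sqrt{Q}$.

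First I would handle the case $IJ \subset Q$. Since $J \not\subset \sqrt{Q}$, I pick some $g \in J$ with $g \notin \sqrt{Q}$. Then for any $f \in I$, the element $fg$ belongs to $IJ$ and hence to $Q$. By the primary property of $Q$ applied to $fg \in Q$ with $g \notin \sqrt{Q}$, I conclude $f \in Q$. Since $f \in I$ was arbitrary, $I \subset Q$.

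For the ``in particular'' part, I use the universal containment $IJ \subset I \cap J$, which holds because every generator $fg$ of $IJ$ lies in both $I$ and $J$. Thus the hypothesis $I \cap J \subset Q$ implies $IJ \subset Q$, so the first part applies and yields $I \subset Q$.

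There is no real obstacle here: the lemma is a direct unpacking of the definition of primary ideal, and the only minor observation needed is the inclusion $IJ \subset I \cap J$, which is standard. The lemma will serve later as a convenient ``primary avoidance'' tool in the same spirit as prime avoidance, enabling component-by-component arguments with ideal quotients.
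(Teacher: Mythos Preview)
Your proof is correct and follows essentially the same approach as the paper: pick $g\in J\setminus\sqrt{Q}$, note that $fg\in IJ\subset Q$ for every $f\in I$, and apply the definition of primary to conclude $f\in Q$. Your explicit treatment of the ``in particular'' clause via $IJ\subset I\cap J$ is a small elaboration the paper leaves implicit, but the argument is otherwise identical.
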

	\begin{proof}
		Let $f\in I$ and $g\in J\setminus \sqrt{Q}$. Since $Q$ is $\sqrt{Q}$-primary, $fg\in IJ\subset Q$ and thus $f\in Q$.  
	\end{proof}

	\begin{Lemma} \label{hull-primary subset}
		Let $I$ be a $P$-hull-primary and $Q$ a $P$-primary ideal. If $I\subset Q$, then $\hull (I)\subset Q$. 
	\end{Lemma}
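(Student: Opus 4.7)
The plan is to split $I$ into its equidimensional hull and a ``lower-dimensional part'' and then apply Lemma \ref{primaryavoid}. Fix an irredundant primary decomposition $I = \bigcap_i Q_i$ and set $I' = \bigcap_{\dim Q_j < \dim I} Q_j$ (with the convention $I' = K[X]$ if no such $Q_j$ exists). By the definition of the hull, $I = \hull(I) \cap I'$, so the hypothesis $I \subset Q$ rewrites as $\hull(I) \cap I' \subset Q$. Lemma \ref{primaryavoid} will then deliver $\hull(I) \subset Q$ as soon as we check $I' \not\subset \sqrt{Q} = P$, so the whole proof reduces to this non-containment.

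To verify $I' \not\subset P$ I would argue by dimension. The $P$-hull-primary hypothesis gives $P = \hull(\sqrt{I})$, which in an irredundant primary decomposition of the hull-primary ideal $I$ identifies $P$ with the unique top-dimensional associated prime; in particular $\dim(K[X]/P) = \dim I$. On the other hand, every minimal prime of $I'$ is equal to $\sqrt{Q_j}$ for some $Q_j$ with $\dim Q_j < \dim I$, so has quotient dimension strictly less than $\dim(K[X]/P)$. If $I'$ were contained in $P$, then $P$ would contain some minimal prime $P''$ of $I'$, forcing $\dim(K[X]/P'') \geq \dim(K[X]/P) = \dim I$, contradicting $\dim(K[X]/P'') < \dim I$. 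Hence $I' \not\subset P$, and Lemma \ref{primaryavoid} yields the desired inclusion.

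The only point that needs genuine care is unpacking the definition of $P$-hull-primary to conclude $\dim(K[X]/P) = \dim I$; this relies on the observation that in an irredundant decomposition of a hull-primary ideal the top-dimensional component must be unique and equal to $\hull(I)$, since otherwise intersecting two primary ideals with distinct radicals would destroy the primarity of the hull. Once that small structural remark is granted, the argument collapses to the standard fact that any prime containing an ideal contains one of its minimal primes, combined with a single invocation of Lemma \ref{primaryavoid}, so I do not anticipate a real obstacle.
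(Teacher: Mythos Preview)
Your proof is correct and follows the same approach as the paper: split $I$ as $\hull(I)\cap J$ with $J$ the intersection of the remaining primary components, observe $J\not\subset P$, and apply Lemma~\ref{primaryavoid}. The paper asserts $J\not\subset P$ without comment, whereas you supply the dimension argument that justifies it, so your write-up is in fact more complete.
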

	\begin{proof}
		Let $\mathcal{Q}$ be a primary decomposition of $I$ and $J=\bigcap_{Q^{\prime}\in \mathcal{Q},Q^{\prime}\neq \hull (I)} Q^{\prime}$. Then $I=\hull (I)\cap J\subset Q$ and $J\not \subset P$. Since $Q$ is $P$-primary,  we obtain $\hull (I)\subset Q$ by Lemma \ref{primaryavoid}. 
	\end{proof}
	
	Finally, we recall the famous Prime Avoidance Lemma. 

	\begin{Lemma}[\cite{atiyah1994introduction}, Proposition 1.11] \label{primeavoid}
			{\rm (i)} Let $P_1,\ldots,P_n$ be prime ideals and let $I$ be an ideal contained in $\bigcup_{i=1}^n P_i$. Then, $I\subset P_i$ for some $i$. \\
			{\rm (ii)} Let $I_1,\ldots,I_n$ be ideals and let $P$ be a prime ideal containing $\bigcap_{i=1}^n I_i$. Then $P\supset I_i$ for some $i$. If $P=\bigcap_{i=1}^n I_i$, then $P=I_i$ for some $i$.  
	\end{Lemma}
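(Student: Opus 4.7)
The plan is to treat the two parts separately, since they share no real structure: (i) is combinatorial and needs induction, while (ii) is a one-line consequence of primality.

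For (i), I would induct on $n$. The case $n=1$ is trivial, so fix $n\ge 2$ and assume the statement for $n-1$. Arguing by contradiction, suppose $I\not\subset P_i$ for every $i$. The inductive hypothesis applied to each family $\{P_j\}_{j\neq i}$ of $n-1$ primes forces $I\not\subset\bigcup_{j\neq i}P_j$, so for each $i$ I can choose an element $x_i\in I\setminus\bigcup_{j\neq i}P_j$; the global hypothesis $I\subset\bigcup_j P_j$ then places $x_i$ inside $P_i$. The crux is to combine the $x_i$ into a single element of $I$ that avoids every $P_k$ at once, and I would use the symmetric sum
\[
y=\sum_{i=1}^n\prod_{j\neq i}x_j.
\]
For fixed $k$, the $i=k$ summand is a product of elements avoiding $P_k$, so primality of $P_k$ keeps the product outside $P_k$; every other summand contains $x_k\in P_k$ as a factor and so lies in $P_k$. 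Hence $y\in I\setminus\bigcup_k P_k$, contradicting the hypothesis.

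For (ii), I would assume for contradiction that $P\not\supset I_i$ for each $i$ and pick $x_i\in I_i\setminus P$; then $\prod_i x_i\in\prod_i I_i\subset\bigcap_i I_i\subset P$ forces some $x_i\in P$ by primality of $P$, a contradiction. Thus $P\supset I_i$ for some $i$. If moreover $P=\bigcap_j I_j$, then also $P\subset I_i$, and combining the two containments gives $P=I_i$.

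The only real obstacle is the bookkeeping in the construction of $y$ in part (i): one must track carefully where primality of each $P_k$ enters (to keep the $i=k$ summand outside $P_k$) and verify that every other summand lies in $P_k$. Once this is done the contradiction is automatic, and part (ii) requires no serious additional work beyond applying primality to a product.
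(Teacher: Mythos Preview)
Your argument is correct in both parts and is essentially the classical proof from Atiyah--MacDonald, Proposition~1.11. The paper itself does not give a proof of this lemma at all---it simply states the result and cites that reference---so there is no ``paper's approach'' to compare against; your write-up matches the standard source the authors are invoking.
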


\subsection{Fundamental Properties of Ideal Quotient}

	We introduce fundamental properties of ideal quotient. The first two  can be seen in several papers and books (\cite{atiyah1994introduction}, Lemma 4.4. \cite{greuel2002singular}, Lemma 4.1.3. \cite{vasconcelos2004computational}, a remark before Proposition 3.56). The last two are direct consequences of the first two. 

	\begin{Lemma} \label{cri2}
		Let $I$ and $J$ be ideals, $Q$ a primary ideal and $\mathcal{Q}$ a primary decomposition of $I$. Then, 
		\begin{align*}
		(Q:J)&=
		\begin{cases}
			Q\text{, if }J\not \subset \sqrt{Q},\\
			K[X]\text{, if }J\subset Q, \\
			\sqrt{Q}\text{-primary ideal properly containing $Q$} \text{, if } J\not \subset Q, J \subset \sqrt{Q},
		\end{cases} \\
		(Q:J^{\infty})&=(Q:\sqrt{J}^{\infty})=
		\begin{cases}
			Q\text{, if }J\not \subset \sqrt{Q},\\
			K[X]\text{, if }J\subset \sqrt{Q},
		\end{cases}\\
		(I:J)&=\bigcap_{Q\in \mathcal{Q},J\not \subset \sqrt{Q}} Q\cap \bigcap_{Q\in \mathcal{Q},J\not \subset Q,J\subset \sqrt{Q}} (Q:J),\\
		(I:J^{\infty})&=(I:\sqrt{J}^{\infty})=\bigcap_{Q\in \mathcal{Q},J\not \subset \sqrt{Q}} Q.
		\end{align*}
		
	\end{Lemma}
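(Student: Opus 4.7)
The plan is to dispatch the four displayed formulas in order, reducing everything to the primary case $I=Q$ via the identity $(\bigcap_i I_i : J) = \bigcap_i (I_i : J)$ applied to the given primary decomposition. The main work is the case analysis for a single primary $Q$; once that is in place, the formulas for $I$ follow by bookkeeping.

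For $(Q:J)$, I would split into three cases. If $J\subset Q$ then $1\in(Q:J)$, giving $K[X]$. If $J\not\subset\sqrt{Q}$, Lemma~\ref{primaryavoid} applied to $(Q:J)\cdot J\subset Q$ gives $(Q:J)\subset Q$, and the reverse inclusion is immediate. In the remaining case $J\not\subset Q$ but $J\subset\sqrt{Q}$, finite generation of $J$ yields a minimal $N\geq 2$ with $J^N\subset Q$, whence $J^{N-1}\subset(Q:J)\setminus Q$ witnesses strict containment. To see that $(Q:J)$ remains $\sqrt{Q}$-primary, I would write $(Q:J)=\bigcap_{g\in J\setminus Q}(Q:g)$ and apply to each factor the standard fact that $(Q:g)$ is $\sqrt{Q}$-primary whenever $g\notin Q$, together with closure of $\sqrt{Q}$-primary ideals under finite intersection.

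For $(Q:J^\infty)$, I would first observe $(Q:J^\infty)=(Q:\sqrt{J}^\infty)$ using that $\sqrt{J}^m\subset J\subset\sqrt{J}$ for some $m$ by finite generation. The two cases then reduce to (1): if $J\subset\sqrt{Q}$, some power of $J$ is in $Q$ and we obtain $K[X]$; if $J\not\subset\sqrt{Q}$, choose $g\in J\setminus\sqrt{Q}$ and note $g^n\notin\sqrt{Q}$ for every $n$ since $\sqrt{Q}$ is prime, so the first subcase of (1) gives $(Q:J^n)=Q$ for all $n$, and taking the union yields $Q$.

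The last two formulas then follow from distributivity over intersection together with case-by-case substitution. For $(I:J)$, the terms from primary components with $J\subset Q$ contribute $K[X]$ and drop out, leaving exactly the two index sets displayed. For $(I:J^\infty)$, only the components with $J\not\subset\sqrt{Q}$ survive. The main obstacle I anticipate is the third subcase of (1), namely tracking that $(Q:J)$ remains $\sqrt{Q}$-primary: the strict containment is cheap, but the radical-preservation needs the per-generator reduction and the intersection-of-primaries lemma. Once that step is settled, the rest is routine.
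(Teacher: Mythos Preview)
Your argument is correct and is essentially the standard one; note, however, that the paper does not actually prove this lemma. The text immediately preceding it says the first two formulas ``can be seen in several papers and books'' (citing Atiyah--Macdonald Lemma~4.4, Greuel--Pfister Lemma~4.1.3, and a remark in Vasconcelos) and that ``the last two are direct consequences of the first two.'' So there is no in-paper proof to compare against; what you have written is precisely the textbook argument those citations point to, together with the routine passage from a single primary $Q$ to the intersection $I=\bigcap_{Q\in\mathcal Q}Q$.

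One small point of hygiene in your third subcase: you write $(Q:J)=\bigcap_{g\in J\setminus Q}(Q:g)$, which as stated is an infinite intersection, and then invoke closure of $\sqrt{Q}$-primary ideals under \emph{finite} intersection. Either replace the index set by a finite generating set $G$ of $J$ (dropping the generators in $Q$, which contribute $K[X]$), or simply observe that an arbitrary intersection of $P$-primary ideals with the same prime $P$ is again $P$-primary. With that tweak the proof is clean.
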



\section{Double Ideal Quotient}
	Double Ideal Quotient (DIQ) is an ideal of shape $(I:(I:J))$ where $I$ and $J$ are ideals. For an ideal $I$ and its primary decomposition  $\mathcal{Q}$, we divide $\mathcal{Q}$ into three parts: 
	\begin{center}	
	$\mathcal{Q}_1(J)=\{Q\in \mathcal{Q}\mid J\not \subset \sqrt{Q}\}, \qquad\mathcal{Q}_2(J)=\{Q\in \mathcal{Q}\mid J \subset Q\},$ \\
	$\mathcal{Q}_3(J)=\{Q\in \mathcal{Q}\mid J\not \subset Q, J\subset \sqrt{Q}\}.$
	\end{center}
	Then, our DIQ is expressed precisely by components of them. The following proposition can be proved directly from Lemma \ref{cri2}. We omit an easy but tedious proof. 

	\begin{Proposition} Let $I$ and $J$ be ideals. Then, \label{dbq}
			\begin{align*}
			(I:(I:J))&=\bigcap_{Q\in \mathcal{Q}_2(J)} \left(Q :\bigcap_{Q^{\prime}\in \mathcal{Q}_1(J)} Q^{\prime}  \cap \bigcap_{Q^{\prime}\in \mathcal{Q}_3(J)} (Q^{\prime}:J)\right)\\ 
				&\cap \bigcap_{Q\in \mathcal{Q}_3(J)}\left(Q :\bigcap_{Q^{\prime}\in \mathcal{Q}_1(J)} Q^{\prime}  \cap \bigcap_{Q^{\prime}\in \mathcal{Q}_3(J)} (Q^{\prime}:J)\right), \\
		\sqrt{(I:(I:J))}&=\bigcap_{P\in \Ass (I),J\subset P} P. 
	\end{align*}
	\end{Proposition}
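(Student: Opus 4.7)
The plan is to apply Lemma~\ref{cri2} twice. Setting $L := (I:J)$ and applying the lemma to $I$ and $J$ yields
\[
L = \bigcap_{Q' \in \mathcal{Q}_1(J)} Q' \cap \bigcap_{Q' \in \mathcal{Q}_3(J)} (Q':J),
\]
since for $Q' \in \mathcal{Q}_2(J)$ we have $(Q':J) = K[X]$, which drops out of the intersection.

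Next I would expand $(I:L) = \bigcap_{Q \in \mathcal{Q}}(Q:L)$ and classify each factor. For $Q \in \mathcal{Q}_1(J)$ the inclusions $L \cdot J \subset I \subset Q$ together with $J \not\subset \sqrt{Q}$ and Lemma~\ref{primaryavoid} force $L \subset Q$, hence $(Q:L) = K[X]$ and this factor disappears. For $Q \in \mathcal{Q}_2(J) \cup \mathcal{Q}_3(J)$ the factor $(Q:L)$ is kept; substituting the above expression for $L$ produces exactly the nested intersections in the first displayed equality.

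For the radical, I would apply Lemma~\ref{cri2} once more to each surviving $(Q:L)$: it is either $K[X]$, equal to $Q$, or $\sqrt{Q}$-primary, so $\sqrt{(Q:L)}$ is either $K[X]$ or $\sqrt{Q}$. Since $Q \mapsto \sqrt{Q}$ is a bijection between $\mathcal{Q}_2(J) \cup \mathcal{Q}_3(J)$ and $\{P \in \Ass(I) : J \subset P\}$, the intersection of these radicals equals $\bigcap_{P \in \Ass(I), J \subset P} P$ as soon as one checks $L \not\subset Q$ for every $Q \in \mathcal{Q}_2(J) \cup \mathcal{Q}_3(J)$. For $Q \in \mathcal{Q}_2(J)$ this is immediate from irredundancy of $\mathcal{Q}$, via $L \supset \bigcap_{Q' \in \mathcal{Q}_1(J) \cup \mathcal{Q}_3(J)} Q' \supset \bigcap_{Q' \neq Q} Q' \not\subset Q$.

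The delicate case is $Q \in \mathcal{Q}_3(J)$, where both $L$ and the factor $(Q:J)$ already sit inside $\sqrt{Q}$ so Lemma~\ref{primaryavoid} alone cannot preclude $L \subset Q$. I would handle it by combining an irredundancy witness $h \in \bigcap_{Q' \neq Q} Q' \setminus Q$ with an element $g \in (Q:J) \setminus Q$: the product $hg$ lies in $L$ because $h$ is in every other primary component while $g \in (Q:J)$, and the primary property of $Q$, together with a further case split according to whether $\sqrt{Q}$ is minimal in $\Ass(I)$, allows one to arrange $hg \notin Q$. This bookkeeping is precisely the ``easy but tedious'' verification the authors refer to.
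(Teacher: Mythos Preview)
Your derivation of the first displayed formula is correct, and so is your treatment of the radical for $Q\in\mathcal{Q}_2(J)$ and for $Q\in\mathcal{Q}_3(J)$ with $\sqrt{Q}$ isolated. The paper omits the proof entirely, so on those parts you are simply supplying the ``easy but tedious'' verification it alludes to, along the same lines (two applications of Lemma~\ref{cri2}).

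There is, however, a genuine gap in the embedded subcase of $\mathcal{Q}_3(J)$. Your plan there is to pick $h\in N:=\bigcap_{Q'\neq Q}Q'$ with $h\notin Q$ and $g\in(Q:J)\setminus Q$ and then ``arrange $hg\notin Q$''. This can be impossible. Take $R=K[x,y]$, $Q_1=(x^2)$, $Q_2=(x^3,y)$, $I=Q_1\cap Q_2=(x^3,x^2y)$, and $J=(x+y)$. Then $Q_2\in\mathcal{Q}_3(J)$, $N=Q_1=(x^2)$, and $(Q_2:J)=(x^2,y)$. Here $N\cdot(Q_2:J)=(x^4,x^2y)\subset(x^3,y)=Q_2$, so \emph{every} product $hg$ with $h\in N$ and $g\in(Q_2:J)$ lies in $Q_2$; yet $L=(I:J)=(x^2)\not\subset Q_2$. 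The needed witness $x^2$ sits in $N\cap(Q_2:J)$ but not in $N\cdot(Q_2:J)$, so the product trick cannot reach it, and the primary property gives no leverage because both factors already lie in $\sqrt{Q_2}$.

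A correct way to close the gap is to argue by contradiction without products. Suppose $L\subset Q$ for some $Q\in\mathcal{Q}_3(J)$. From $L\supset N\cap(Q:J)$ one gets $N\cap(Q:J)\subset N\cap Q=I$, hence $I=N\cap(Q:J)$ is again an irredundant primary decomposition with $\sqrt{Q}$-component $(Q:J)\supsetneq Q$. Iterating, and using that $J^k\subset Q$ for some minimal $k\geq 2$, one reaches the decomposition $I=N\cap(Q:J^{k-1})$ in which the $\sqrt{Q}$-component lies in $\mathcal{Q}_2(J)$ (because $J\subset(Q:J^{k-1})$). Computing $(I:J)$ from that decomposition gives $L=\bigcap_{Q'\neq Q}(Q':J)\supset N$, and $N\not\subset Q$ by irredundancy, contradicting $L\subset Q$. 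With this established, $\sqrt{(Q:L)}=\sqrt{Q}$ for every $Q\in\mathcal{Q}_2(J)\cup\mathcal{Q}_3(J)$ and the radical formula follows.
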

	
	This proposition can be used to prove the following for prime divisors. 
	
	\begin{Corollary}[\cite{vasconcelos2004computational}, Corollary 3.4] \label{DIPC}
		Let $I$ be an ideal and $P$ a prime ideal. Then, $P$ belongs to $\Ass(I)$ if and only if $P \supset  (I : (I : P))$.
			
	\end{Corollary}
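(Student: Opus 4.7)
The plan is to deduce this as an essentially immediate consequence of the radical formula in Proposition \ref{dbq}, combined with Prime Avoidance (Lemma \ref{primeavoid}).

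First, I would specialize Proposition \ref{dbq} to $J = P$, which gives
\[
\sqrt{(I:(I:P))} = \bigcap_{P' \in \Ass(I),\, P \subset P'} P'.
\]
Since $P$ is prime, it is radical, so the containment $P \supset (I:(I:P))$ is equivalent to $P \supset \sqrt{(I:(I:P))}$. Thus the claim reduces to showing that $P \in \Ass(I)$ iff $P$ contains the intersection on the right-hand side.

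For the ``only if'' direction, if $P \in \Ass(I)$ then $P$ itself satisfies $P \subset P$, so it appears as one of the factors in the intersection; hence the intersection is contained in $P$. For the ``if'' direction, suppose $P$ contains $\bigcap_{P' \in \Ass(I),\, P \subset P'} P'$. By Lemma \ref{primeavoid}(ii), $P \supset P'$ for some $P' \in \Ass(I)$ with $P \subset P'$, and combining these two inclusions forces $P = P' \in \Ass(I)$.

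The only mild subtlety is handling the case where the indexing set $\{P' \in \Ass(I) : P \subset P'\}$ is empty; then the intersection is the whole ring $K[X]$, and the containment $P \supset K[X]$ fails, consistent with $P \notin \Ass(I)$ in that situation. Beyond that bookkeeping, the argument is routine once Proposition \ref{dbq} is in hand, so I do not anticipate a real obstacle.
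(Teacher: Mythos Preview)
Your proof is correct and follows essentially the same approach as the paper: both reduce to the radical formula from Proposition \ref{dbq} and then argue the two directions exactly as you do, with the ``if'' direction using that a prime containing a finite intersection must contain one of the factors. The paper is slightly terser (it does not explicitly cite Lemma \ref{primeavoid} or discuss the empty-index-set case), but the substance is identical.
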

	\begin{proof}
		We note $P\supset (I:(I:P))$ if and only if $P\supset \sqrt{(I:(I:P))}$.  By Proposition \ref{dbq}, $\sqrt{(I:(I:P))}=\bigcap_{P^{\prime}\in \Ass (I), P\subset P^{\prime}} P^{\prime}$. If $P\in \Ass (I)$, then $\sqrt{(I:(I:P))}=\bigcap_{P^{\prime}\in \Ass (I), P\subset P^{\prime}} P^{\prime}\subset P$. On the other hand, if $P\supset \sqrt{(I:(I:P))}$, then there is $P^{\prime}\in \Ass (I)$ s.t. $P^{\prime}\subset P$ and $P^{\prime}\supset P$. Thus $P=P^{\prime}\in \Ass (I)$. 
	\end{proof}
	
	Replacing ideal quotient with saturation in DIQ, we have the following. 
	
	\begin{Proposition}\label{satcolon}
		Let $\mathcal{Q}$ be a primary decomposition of $I$. Then, 
		\begin{align*}
		(I:(I:J)^{\infty})&=\bigcap_{Q\in \mathcal{Q}, J\subset IK[X]_{\sqrt{Q}}\cap K[X]} Q \tag{1},\\
		(I:(I:J^{\infty})^{\infty})&=\bigcap_{Q\in \mathcal{Q}, J\subset \sqrt{IK[X]_{\sqrt{Q}}\cap K[X]}} Q, \tag{2}\\
		(I:(I:J^{\infty}))&=\bigcap_{Q\in \mathcal{Q}_2(J)} (Q:\bigcap_{Q^{\prime}\in \mathcal{Q}_1(J)} Q^{\prime} )\cap  \bigcap_{Q\in \mathcal{Q}_3(J)}(Q :\bigcap_{Q^{\prime}\in \mathcal{Q}_1(J)} Q^{\prime} ). \tag{3}
		\end{align*}
		 We call them {\rm the first saturated quotient}, {\rm the second saturated quotient}, and {\rm the third saturated quotient} respectively. 
	\end{Proposition}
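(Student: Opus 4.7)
The plan is to prove all three formulas by applying Lemma~\ref{cri2} (possibly twice) and using Prime Avoidance (Lemma~\ref{primeavoid}) to re-express the resulting containment conditions. In each case I first expand the inner expression $(I:J)$ or $(I:J^{\infty})$ via Lemma~\ref{cri2}, then apply Lemma~\ref{cri2} again with this inner ideal playing the role of ``$J$''. The combinatorial content is then to rewrite the condition ``$Q$ survives in the outer saturated quotient'' in the form stated in the Proposition.

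For formula (1): by Lemma~\ref{cri2},
\[
(I:J) \;=\; \bigcap_{Q\in \mathcal{Q}_1(J)} Q \;\cap\; \bigcap_{Q\in \mathcal{Q}_3(J)} (Q:J),
\]
and since $(Q:J)$ is $\sqrt{Q}$-primary in the $\mathcal{Q}_3(J)$ case, taking radicals gives $\sqrt{(I:J)} = \bigcap_{Q' \in \mathcal{Q},\,J\not\subset Q'} \sqrt{Q'}$. Applying Lemma~\ref{cri2} once more, $(I:(I:J)^{\infty}) = \bigcap_{Q : (I:J)\not\subset \sqrt{Q}} Q$, so I need to show
\[
(I:J)\not\subset \sqrt{Q}\quad \Longleftrightarrow\quad J\subset IK[X]_{\sqrt{Q}}\cap K[X].
\]
Since $IK[X]_{\sqrt{Q}}\cap K[X] = \bigcap_{\sqrt{Q'}\subset \sqrt{Q}} Q'$, Prime Avoidance translates $(I:J)\subset \sqrt{Q}$ (equivalently $\sqrt{(I:J)}\subset \sqrt{Q}$) into the existence of some $Q'\in \mathcal{Q}$ with $\sqrt{Q'}\subset \sqrt{Q}$ and $J\not\subset Q'$, which is exactly the negation of $J\subset IK[X]_{\sqrt{Q}}\cap K[X]$.

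For formula (2): here $(I:J^{\infty}) = \bigcap_{Q\in \mathcal{Q}_1(J)} Q$ by Lemma~\ref{cri2}, so $\sqrt{(I:J^{\infty})} = \bigcap_{Q\in \mathcal{Q}_1(J)} \sqrt{Q}$. Again applying Lemma~\ref{cri2} and using Prime Avoidance, the condition $(I:J^{\infty})\not\subset \sqrt{Q}$ is equivalent to: for every $Q'\in \mathcal{Q}$ with $\sqrt{Q'}\subset \sqrt{Q}$ one has $J\subset \sqrt{Q'}$, which is precisely $J\subset \bigcap_{\sqrt{Q'}\subset\sqrt{Q}} \sqrt{Q'} = \sqrt{IK[X]_{\sqrt{Q}}\cap K[X]}$. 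This yields the claimed description.

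For formula (3): writing $B=(I:J^{\infty})=\bigcap_{Q\in \mathcal{Q}_1(J)} Q$, the general identity $(I:B)=\bigcap_{Q\in \mathcal{Q}}(Q:B)$ applies. For $Q\in \mathcal{Q}_1(J)$ we have $B\subset Q$, hence $(Q:B)=K[X]$ and these terms drop out, leaving the intersection over $\mathcal{Q}_2(J)\cup \mathcal{Q}_3(J)$, which is exactly the stated expression. The main obstacle in the whole proof is the bookkeeping in (1) and (2): one must carefully distinguish the three classes $\mathcal{Q}_1(J)$, $\mathcal{Q}_2(J)$, $\mathcal{Q}_3(J)$ when taking radicals, and then correctly invoke Prime Avoidance in both directions to match the intrinsic description $IK[X]_{\sqrt{Q}}\cap K[X]$ (resp.\ its radical); (3) is then essentially formal.
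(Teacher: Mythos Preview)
Your argument is correct and for formulas (1) and (3) coincides with the paper's proof: expand the inner quotient via Lemma~\ref{cri2}, take radicals, and use Prime Avoidance to identify the surviving $Q$'s with the localization condition. The one genuine difference is in (2). The paper reduces (2) to (1) by choosing $m$ large enough that $(I:J^{\infty})=(I:J^{m})$, applying (1) with $J^{m}$ in place of $J$, and then arguing that $J^{m}\subset IK[X]_{\sqrt{Q}}\cap K[X]$ for this $m$ is equivalent to $J\subset \sqrt{IK[X]_{\sqrt{Q}}\cap K[X]}$. You instead argue directly: $(I:J^{\infty})=\bigcap_{Q'\in\mathcal{Q}_1(J)}Q'$, so its radical is $\bigcap_{Q'\in\mathcal{Q}_1(J)}\sqrt{Q'}$, and Prime Avoidance converts $(I:J^{\infty})\not\subset\sqrt{Q}$ into ``every $Q'$ with $\sqrt{Q'}\subset\sqrt{Q}$ satisfies $J\subset\sqrt{Q'}$'', which is exactly $J\subset\sqrt{IK[X]_{\sqrt{Q}}\cap K[X]}$. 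Your route avoids the bookkeeping of the uniform exponent $m$ and is slightly cleaner; the paper's route has the mild conceptual advantage of exhibiting (2) as a formal consequence of (1). Both are equally valid.
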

	
	\begin{proof}
		Here, we give an outline of the proof. The formula (1) can be proved by combining the equation 
		\begin{center}
		$(I:(I:J)^{\infty})=(I:\sqrt{(I:J)}^{\infty})=\bigcap_{Q\in \mathcal{Q},\bigcap_{Q^{\prime}\in \mathcal{Q}_1(J)}\sqrt{Q^{\prime}} \cap \bigcap_{Q^{\prime}\in \mathcal{Q}_3(J)}\sqrt{Q^{\prime}}\not \subset \sqrt{Q}} Q$ 
		\end{center}
		by Lemma \ref{cri2} and the following equivalence
		\begin{enumerate}  \setlength{\leftskip}{.5cm} 
			\item[(1-a)] $J\subset IK[X]_{\sqrt{Q}}\cap K[X]$.
			\item[(1-b)] $\bigcap_{Q^{\prime}\in \mathcal{Q}_1(J)}\sqrt{Q^{\prime}} \cap \bigcap_{Q^{\prime}\in \mathcal{Q}_3(J)}\sqrt{Q^{\prime}}\not \subset \sqrt{Q}$. 
		\end{enumerate}
		 for each $Q\in \mathcal{Q}$. The second formula (2) can be proved by combining the equation $(I:(I:J^{\infty})^{\infty})=(I:(I:J^m)^{\infty})=\bigcap_{Q\in \mathcal{Q}, J^m\subset IK[X]_{\sqrt{Q}}\cap K[X]} Q$ for a sufficiently large $m$ from the first formula (1), and the following equivalence 
		\begin{enumerate} \setlength{\leftskip}{.5cm}
			\item[(2-a)] $J^m\subset IK[X]_{\sqrt{Q}}\cap K[X]$ for a sufficiently large $m$.
			\item[(2-b)] $J\subset \sqrt{IK[X]_{\sqrt{Q}}\cap K[X]}$.
		\end{enumerate}
		for each $Q\in \mathcal{Q}$.  The third formula (3) can be proved directly from Lemma \ref{cri2}. 
		
		Now, we explain some details. We show (1-a) implies (1-b). If 
		\begin{center}
		$\bigcap_{Q^{\prime}\in \mathcal{Q}_1(J)}\sqrt{Q^{\prime}} \cap \bigcap_{Q^{\prime}\in \mathcal{Q}_3(J)}\sqrt{Q^{\prime}} \subset \sqrt{Q}$, 
		\end{center}
		then by Lemma \ref{primeavoid}, $\sqrt{Q^{\prime}}\subset \sqrt{Q}$ for some $Q^{\prime}\in \mathcal{Q}_1(J)\cup \mathcal{Q}_3(J)$. Since $Q^{\prime}\subset \sqrt{Q^{\prime}}\subset \sqrt{Q}$, we obtain $IK[X]_{\sqrt{Q}}\cap K[X]=\bigcap_{Q^{\prime \prime}\in \mathcal{Q},Q^{\prime \prime}\subset \sqrt{Q}} Q^{\prime \prime}\subset Q^{\prime}.$
		However, since $Q^{\prime}\in \mathcal{Q}_1(J)\cup \mathcal{Q}_3(J)$, we obtain $J\not \subset Q^{\prime}$ and this contradicts $J\subset IK[X]_{\sqrt{Q}}\cap K[X]\subset Q^{\prime}$. 
		
		Show (1-b) implies (1-a). Let $Q^{\prime}\in \mathcal{Q}$ contained $\sqrt{Q}$. Since $\bigcap_{Q^{\prime \prime}\in \mathcal{Q}_1(J)}\sqrt{Q^{\prime \prime}} \cap \bigcap_{Q^{\prime \prime}\in \mathcal{Q}_3(J)}\sqrt{Q^{\prime \prime}}\not \subset \sqrt{Q}$, we obtain $Q^{\prime}\not \in \mathcal{Q}_1(J)\cup \mathcal{Q}_3(J)$ and $Q^{\prime}\in \mathcal{Q}_2(J)$. Hence, $J\subset Q^{\prime}$ and $J\subset \bigcap_{Q^{\prime}\subset \sqrt{Q}} Q^{\prime}=IK[X]_{\sqrt{Q}}\cap K[X]$.
				
		Trivially, (2-a) implies (2-b) since $J\subset \sqrt{J^m}\subset  \sqrt{IK[X]_{\sqrt{Q}}\cap K[X]}$. Show (2-b) implies (2-a). For $Q\in \mathcal{Q}_2(J)\cup \mathcal{Q}_3(J)$, let $m_Q=\min \{m\mid J^m\subset Q\}$ and $m=\max\{m_Q\mid Q\in \mathcal{Q}_2(J)\cup \mathcal{Q}_3(J)\}.$ Then, $(I:J^{\infty})=(I:J^m)$.  Since $IK[X]_{\sqrt{Q}}\cap K[X]=\bigcap_{Q^{\prime}\in \mathcal{Q},Q^{\prime}\subset \sqrt{Q}} Q^{\prime}$, we obtain $Q^{\prime}\in \mathcal{Q}_2(J)\cup \mathcal{Q}_3(J)$ for any $Q^{\prime}\in \mathcal{Q}$ contained in $\sqrt{Q}$. Thus, we obtain $J^m\subset IK[X]_{\sqrt{Q}}\cap K[X]$. 
	\end{proof}
	
	Using the first saturated quotient, we devise criteria for primary component in Section 4. The second saturated quotient can be used to isolated prime divisor check and generate an isolated primary component in Section 5. The third saturated quotient gives another prime divisor criterion (Criterion 5 in Section 4) other than Corollary \ref{cri2} by the following proposition. 
	
	\begin{Proposition} \label{radthird}
		Let $I$ and $J$ be ideals. Then $\sqrt{(I:(I:J^{\infty}))}=\bigcap_{P\in \Ass (I),J\subset P} P.$
	\end{Proposition}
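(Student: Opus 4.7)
The plan is to derive the identity directly from Proposition \ref{satcolon}(3). Fix an irredundant primary decomposition $\mathcal{Q}$ of $I$ and partition it as $\mathcal{Q}_1(J)\cup\mathcal{Q}_2(J)\cup\mathcal{Q}_3(J)$. By Lemma \ref{cri2}, $L := (I:J^{\infty}) = \bigcap_{Q'\in\mathcal{Q}_1(J)} Q'$, so formula (3) specializes to
\[
(I:(I:J^{\infty})) = \bigcap_{Q\in\mathcal{Q}_2(J)\cup\mathcal{Q}_3(J)} (Q:L).
\]
The approach is to take radicals term-by-term and match each factor to the corresponding prime divisor $\sqrt{Q}$.

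The crucial observation is that $L \not\subset Q$ for every $Q\in\mathcal{Q}_2(J)\cup\mathcal{Q}_3(J)$. This rules out the degenerate case $(Q:L)=K[X]$ from Lemma \ref{cri2}, and is the only nonroutine point in the argument. To justify it, I will appeal to irredundancy of $\mathcal{Q}$: since
\[
I = L \cap \bigcap_{Q\in\mathcal{Q}_2(J)\cup\mathcal{Q}_3(J)} Q,
\]
an inclusion $L\subset Q_0$ with $Q_0\in\mathcal{Q}_2(J)\cup\mathcal{Q}_3(J)$ would yield $L\cap Q_0 = L$, allowing $Q_0$ to be dropped from $\mathcal{Q}$ and contradicting irredundancy. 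I expect this irredundancy step to be the main obstacle, as it is the sole place where one has to argue beyond direct quotation of Lemma \ref{cri2}.

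With $L\not\subset Q$ in hand, Lemma \ref{cri2} describes $(Q:L)$ in the two remaining cases: either $L\not\subset\sqrt{Q}$, whence $(Q:L)=Q$, or $L\subset\sqrt{Q}$, whence $(Q:L)$ is a $\sqrt{Q}$-primary ideal properly containing $Q$. In both cases $\sqrt{(Q:L)}=\sqrt{Q}$. Distributing the radical across the intersection then gives
\[
\sqrt{(I:(I:J^{\infty}))} = \bigcap_{Q\in\mathcal{Q}_2(J)\cup\mathcal{Q}_3(J)} \sqrt{Q} = \bigcap_{P\in\Ass(I),\, J\subset P} P,
\]
which is exactly the claimed identity.
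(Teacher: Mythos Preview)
Your argument is correct and follows essentially the same route as the paper: both invoke Proposition~\ref{satcolon}(3), use irredundancy of $\mathcal{Q}$ to rule out $\bigcap_{Q'\in\mathcal{Q}_1(J)}Q'\subset Q$ for $Q\in\mathcal{Q}_2(J)\cup\mathcal{Q}_3(J)$, and then apply Lemma~\ref{cri2} to conclude $\sqrt{(Q:L)}=\sqrt{Q}$. Your justification of the irredundancy step is slightly more explicit than the paper's, but the structure is identical.
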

	\begin{proof}
		Let $\mathcal{Q}$ be a primary decomposition of $I$. By Proposition \ref{satcolon} (3), 
		\[
		\sqrt{(I:(I:J^{\infty}))}=\bigcap_{Q\in \mathcal{Q}_2(J)} \sqrt{(Q:\bigcap_{Q^{\prime}\in \mathcal{Q}_1(J)} Q^{\prime} )}\cap  \bigcap_{Q\in \mathcal{Q}_3(J)}\sqrt{(Q :\bigcap_{Q^{\prime}\in \mathcal{Q}_1(J)} Q^{\prime} )}.
		\]
		Since $\mathcal{Q}$ is minimal, we obtain $Q\not \supset \bigcap_{Q^{\prime}\in \mathcal{Q}_1(J)} Q^{\prime}$ for any $Q\in \mathcal{Q}_2(J)$ and $Q\not \supset \bigcap_{Q^{\prime}\in \mathcal{Q}_1(J)} Q^{\prime} $ for any $Q\in \mathcal{Q}_3(J)$. Thus, by Lemma \ref{cri2}, 
		\begin{align*}
				\sqrt{(I:(I:J^{\infty}))}&=\bigcap_{Q\in \mathcal{Q}_2(J)} \sqrt{(Q:\bigcap_{Q^{\prime}\in \mathcal{Q}_1(J)} Q^{\prime} )}\cap  \bigcap_{Q\in \mathcal{Q}_3(J)}\sqrt{(Q :\bigcap_{Q^{\prime}\in \mathcal{Q}_1(J)} Q^{\prime} )}	\\
								&=\bigcap_{Q\in \mathcal{Q}_2(J)} \sqrt{Q}\cap \bigcap_{Q\in \mathcal{Q}_3(J)}\sqrt{Q}=\bigcap_{P\in \Ass (I),J\subset P} P.  
		\end{align*} 	
	\end{proof}

\section{Criteria for Primary Component and Prime Divisor}\label{sec-5}
In this section, we present several criteria for primary component which check if a $P$-primary ideal $Q$ is a primary component of $I$ or not without computing primary decomposition of $I$  based on the first saturated quotient. We first propose a general criterion applicable to any primary ideals. Later, we propose some specialized criteria aiming for isolated primary components and maximal ones. Finally, we add criteria for prime divisors. 

\subsection{General Primary Component Criterion}

	We use the first saturated quotient to check if a given primary ideal is a component or not. We introduce a key notion {\em saturated quotient invariant}. 
	\begin{Definition}
		Let $I$ and $J$ be ideals. We say that $J$ is saturated quotient invariant of $I$ if $(I:(I:J)^{\infty})=J.$
	\end{Definition}
	
	Any localization is saturated quotient invariant. Conversely, any proper saturated quotient invariant ideal is some localization of $I$. 
	
	\begin{Lemma} \label{satcoloninv}
		Let $I$ be an ideal and $J$ a proper ideal of $K[X]$. Then, the following conditions are equivalent. 
		
			$(A)$ $J=IK[X]_S\cap K[X]$ for some multiplicatively closed set $S$. 
			
			$(B)$ $J$ is saturated quotient invariant of $I$. 
	\end{Lemma}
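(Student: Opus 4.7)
The plan is to reduce both directions to the explicit formula
\[
(I:(I:J)^{\infty}) = \bigcap_{Q \in \mathcal{Q},\ J \subset IK[X]_{\sqrt{Q}} \cap K[X]} Q
\]
supplied by Proposition \ref{satcolon} (1), where $\mathcal{Q}$ is any primary decomposition of $I$. Let $\mathcal{Q}_J$ denote the indexing subfamily on the right. Then $(I:(I:J)^{\infty}) = \bigcap_{Q \in \mathcal{Q}_J} Q$ automatically, and condition $(B)$ is precisely the assertion $J = \bigcap_{Q \in \mathcal{Q}_J} Q$.

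For $(A) \Rightarrow (B)$, I would apply Lemma \ref{mulp} to $J = IK[X]_S \cap K[X]$: for each $Q \in \mathcal{Q}$, the inclusion $J \subset IK[X]_{\sqrt{Q}} \cap K[X]$ is equivalent to $Q \cap S = \emptyset$. Thus $\mathcal{Q}_J = \{Q \in \mathcal{Q} : Q \cap S = \emptyset\}$, and substitution into the formula above yields $(I:(I:J)^{\infty}) = \bigcap_{Q \cap S = \emptyset} Q = IK[X]_S \cap K[X] = J$.

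For $(B) \Rightarrow (A)$, set $\mathcal{P}_J = \{\sqrt{Q} : Q \in \mathcal{Q}_J\} \subset \Ass(I)$ and choose $S = K[X] \setminus \bigcup_{P \in \mathcal{P}_J} P$, which is multiplicatively closed as the complement of a union of prime ideals. The crux is to show $\mathcal{P}_J$ is an isolated subset of $\Ass(I)$ in the sense of the definition preceding Lemma \ref{isomul}: if $\sqrt{Q} \in \mathcal{P}_J$ and $\sqrt{Q'} \subset \sqrt{Q}$ with $Q' \in \mathcal{Q}$, then $\{Q'' \in \mathcal{Q} : Q'' \subset \sqrt{Q'}\} \subset \{Q'' \in \mathcal{Q} : Q'' \subset \sqrt{Q}\}$, so $IK[X]_{\sqrt{Q}} \cap K[X] \subset IK[X]_{\sqrt{Q'}} \cap K[X]$, hence $J \subset IK[X]_{\sqrt{Q'}} \cap K[X]$ and $Q' \in \mathcal{Q}_J$. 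Lemma \ref{isomul} then gives $IK[X]_S \cap K[X] = \bigcap_{Q \in \mathcal{Q},\ \sqrt{Q} \in \mathcal{P}_J} Q = \bigcap_{Q \in \mathcal{Q}_J} Q = (I:(I:J)^{\infty}) = J$.

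The only nontrivial step is verifying the downward closedness of $\mathcal{P}_J$, which is what unlocks Lemma \ref{isomul}; it rests on the elementary monotonicity $IK[X]_P \cap K[X] \subset IK[X]_{P'} \cap K[X]$ for $P' \subset P$. Everything else is careful bookkeeping with the subfamily $\mathcal{Q}_J$ read off from Proposition \ref{satcolon} (1).
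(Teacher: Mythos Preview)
Your proof is correct and follows essentially the same route as the paper's: both directions hinge on Proposition~\ref{satcolon}~(1), with Lemma~\ref{mulp} driving $(A)\Rightarrow(B)$ and the isolated-subset argument plus Lemma~\ref{isomul} driving $(B)\Rightarrow(A)$. The only minor point you leave implicit is that $\mathcal{P}_J\neq\emptyset$ (otherwise the empty intersection gives $J=K[X]$, contradicting properness), which the paper notes explicitly; this also ensures $0\notin S$.
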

	
	\begin{proof}
		Let $\mathcal{Q}$ be a primary decomposition. Show $(A)$ implies $(B)$. From Proposition \ref{satcolon} (1), 
		\begin{equation}
		(I:(I:IK[X]_S\cap A)^{\infty})=\bigcap_{Q\in \mathcal{Q}, IK[X]_S\cap K[X] \subset IK[X]_{\sqrt{Q}}\cap K[X]} Q. \label{eq:1}
		\end{equation}
		By Lemma \ref{mulp}, $IK[X]_S\cap K[X]\subset IK[X]_{\sqrt{Q}}\cap K[X]$ if and only if $Q\cap S=\emptyset $. Thus, 
		\begin{equation}
		\bigcap_{Q\in \mathcal{Q},IK[X]_S\cap K[X]\subset IK[X]_{\sqrt{Q}}\cap K[X]} Q=\bigcap_{Q\in \mathcal{Q},Q\cap S=\emptyset } Q, \label{eq:2}
		\end{equation}
		
		Combining $\eqref{eq:1}$, $\eqref{eq:2}$ and $IK[X]_S\cap K[X]=\bigcap_{Q\in \mathcal{Q},Q\cap S=\emptyset } Q$ by Remark \ref{rm:locpri}, we obtain $(I:(I:IK[X]_S\cap A)^{\infty})=IK[X]_S\cap K[X]$. 
		
		Next, show $(B)$ implies $(A)$. From Proposition \ref{satcolon} (1), 
		\begin{equation}
		(I:(I:J)^{\infty})=\bigcap_{J \subset IK[X]_{\sqrt{Q}}\cap K[X]} Q=J. \label{eq:3}
		\end{equation}
		Let $\mathcal{P}=\{\sqrt{Q}\mid Q\in \mathcal{Q},J \subset IK[X]_{\sqrt{Q}}\cap K[X]\}$. We may assume $\mathcal{P}\neq \emptyset $, otherwise $\mathcal{P}=\emptyset $ and $J=K[X]$. Then $\mathcal{P}$ is {\em isolated} since if $P^{\prime}\in \Ass (I)$ and $P^{\prime}\subset P$ for some $P\in \mathcal{P}$, then $J\subset  IK[X]_{P}\cap K[X]\subset  IK[X]_{P^{\prime}}\cap K[X]$ and $P^{\prime}\in \mathcal{P}$. Let $S=K[X]\setminus \bigcup_{P\in \mathcal{P}} P$. By Lemma \ref{isomul}, $IK[X]_S\cap K[X]=\bigcap_{Q\in \mathcal{Q}, \sqrt{Q}\in \mathcal{P}} Q=\bigcap_{J \subset IK[X]_{\sqrt{Q}}\cap K[X]} Q$. By \eqref{eq:3}, we obtain $IK[X]_S\cap K[X]=J$. 
		
	\end{proof}

	Based on Lemma \ref{satcoloninv}, we have the following criterion for primary component. 
	
	\begin{Theorem}[Criterion 1] \label{primarycri}
		Let $I$ be an ideal and $P$ a prime divisor of $I$. For a $P$-primary ideal $Q$, if $Q\not \supset (I:P^{\infty})$, then the following conditions are equivalent. 
		
			$(A)$ $Q$ is a $P$-primary component for some primary decomposition of $I$.
			
			$(B)$ $(I:P^{\infty})\cap Q$ is saturated quotient invariant of $I$. 
	\end{Theorem}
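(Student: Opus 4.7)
The overall strategy is to use Lemma~\ref{satcoloninv} to convert ``saturated quotient invariant'' into ``is of the form $IK[X]_S \cap K[X]$'', and then chase primary decompositions with Proposition~\ref{satcolon}~(1), Remark~\ref{rm:locpri}, and the primary cancellation provided by Lemma~\ref{primaryavoid}.

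For (A)~$\Rightarrow$~(B), suppose $Q$ is the $P$-primary component of some irredundant primary decomposition $\mathcal{Q}$ of $I$. I would take the multiplicatively closed set
\[
S = K[X] \setminus \Bigl(P \cup \bigcup_{Q' \in \mathcal{Q},\, P \not\subset \sqrt{Q'}} \sqrt{Q'}\Bigr),
\]
and verify via Lemma~\ref{primeavoid} that for each $Q' \in \mathcal{Q}$ we have $Q' \cap S = \emptyset$ iff either $Q' = Q$ or $P \not\subset \sqrt{Q'}$ (the excluded case $Q' \ne Q$ with $P \subsetneq \sqrt{Q'}$ fails because $Q' \subset \sqrt{Q''}$ would transfer $P$ into $\sqrt{Q''}$). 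Remark~\ref{rm:locpri} then yields $IK[X]_S \cap K[X] = Q \cap (I:P^\infty)$, and since this ideal is properly contained in $Q$, Lemma~\ref{satcoloninv} concludes that it is saturated quotient invariant of~$I$.

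For (B)~$\Rightarrow$~(A), set $J = (I:P^\infty) \cap Q$ and fix an irredundant primary decomposition $\mathcal{Q}$ of $I$ with $P$-primary component $Q_0$. By saturated quotient invariance and Proposition~\ref{satcolon}~(1), $J = \bigcap_{Q' \in \mathcal{Q}^*} Q'$ where $\mathcal{Q}^* = \{Q' \in \mathcal{Q} : J \subset IK[X]_{\sqrt{Q'}} \cap K[X]\}$. The hypothesis $(I:P^\infty) \not\subset Q$ forces $Q$ to be non-redundant in the (possibly redundant) primary decomposition $J = Q \cap \bigcap_{P \not\subset \sqrt{Q'}} Q'$; since $Q$ is the only $P$-primary term appearing, it must survive any refinement, so $P \in \Ass(J)$. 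Matching this with the decomposition $J = \bigcap_{\mathcal{Q}^*} Q'$ and using the irredundancy of $\mathcal{Q}$, I get $Q_0 \in \mathcal{Q}^*$, hence $J \subset Q_0$. Since $(I:P^\infty) \not\subset P = \sqrt{Q_0}$, applying Lemma~\ref{primaryavoid} to $Q \cdot (I:P^\infty) \subset J \subset Q_0$ delivers $Q \subset Q_0$.

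To close, I would replace $Q_0$ by $Q$ in $\mathcal{Q}$: combining $I \subset J \subset Q$ with $Q \subset Q_0$ gives
\[
Q \cap \bigcap_{Q' \in \mathcal{Q},\, Q' \ne Q_0} Q' = I,
\]
a primary decomposition of $I$ containing $Q$; irredundancy at $Q$ holds since $Q \supset \bigcap_{Q' \ne Q_0} Q'$ would force $I = \bigcap_{Q' \ne Q_0} Q'$, contradicting the irredundancy of $\mathcal{Q}$ at $Q_0$. Passing to an irredundant refinement preserves $Q$ as the $P$-primary component. The main obstacle is the (B)~$\Rightarrow$~(A) direction, concretely the leap from the saturation-theoretic hypothesis to the inclusion $Q \subset Q_0$: the hypothesis says nothing a priori about $Q_0$, and the bridge is Proposition~\ref{satcolon}~(1), which forces $Q_0$ into the decomposition of $J$ once $P \in \Ass(J)$ is established, after which Lemma~\ref{primaryavoid} performs the primary cancellation that transfers the containment from $J$ to $Q$.
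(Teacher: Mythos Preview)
Your (A)~$\Rightarrow$~(B) is essentially the paper's argument (same multiplicatively closed set, same verification via prime avoidance and Remark~\ref{rm:locpri}).

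Your (B)~$\Rightarrow$~(A) contains a genuine gap. The claim $(I:P^{\infty}) \not\subset P$ is false whenever $P$ is embedded: if $P' \in \Ass(I)$ with $P' \subsetneq P$, then $P \not\subset P'$ forces the $P'$-primary component $Q_{P'}$ into the intersection defining $(I:P^{\infty})$, whence $(I:P^{\infty}) \subset Q_{P'} \subset P' \subset P$. Consequently the derived inclusion $Q \subset Q_0$ can fail outright. Concretely, take $I = (x^2, xy) \subset K[x,y]$, $P = (x,y)$, and the irredundant decomposition $\mathcal{Q} = \{(x), (x,y)^2\}$, so $Q_0 = (x,y)^2$. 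With $Q = (x^2, y)$ one checks $J = (x) \cap (x^2,y) = I$ is saturated quotient invariant, yet $y \in Q \setminus Q_0$, so $Q \not\subset Q_0$.

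The repair is immediate and bypasses $Q \subset Q_0$ entirely: since $\{Q' \in \mathcal{Q} : Q' \neq Q_0\} \supset \{Q' \in \mathcal{Q} : P \not\subset \sqrt{Q'}\}$, you have $\bigcap_{Q' \neq Q_0} Q' \subset (I:P^{\infty})$, hence
\[
Q \cap \bigcap_{Q' \neq Q_0} Q' \;\subset\; Q \cap (I:P^{\infty}) \;=\; J \;\subset\; Q_0,
\]
and together with the trivial containment in $\bigcap_{Q' \neq Q_0} Q'$ this yields $Q \cap \bigcap_{Q' \neq Q_0} Q' \subset I$; the reverse inclusion follows from $I \subset J \subset Q$.

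For comparison, the paper's (B)~$\Rightarrow$~(A) avoids fixing a distinguished $Q_0$ altogether: it invokes Lemma~\ref{satcoloninv} to produce $S$ with $J = IK[X]_S \cap K[X]$, then writes $I = (I:P^{\infty}) \cap Q \cap \bigcap_{Q' \cap S \neq \emptyset} Q'$ and observes that $Q$ is the unique term with radical $P$ (the other two pieces have no $P$-primary component because $P \in \Ass(J)$ forces $Q_0 \cap S = \emptyset$). Your route through Proposition~\ref{satcolon}~(1) and the inclusion $J \subset Q_0$ is a legitimate alternative once the faulty cancellation step is removed.
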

	
	\begin{proof}
		Show $(A)$ implies $(B)$. Let $\mathcal{Q}$ be a primary decomposition. Let $\mathcal{P}=\{P^{\prime}\in \Ass (I)\mid P\not \subset P^{\prime} \text{ or } P^{\prime}=P \}$ and $S=K[X]\setminus \bigcup_{P^{\prime}\in \mathcal{P}} P^{\prime}$. Then $S$ is a multiplicatively closed set and $(I:P^{\infty})\cap Q\subset IK[X]_S\cap K[X]$ since $(I:P^{\infty})\cap Q=\bigcap_{Q^{\prime}\in \mathcal{Q},P\not \subset \sqrt{Q^{\prime}}} Q^{\prime}\cap Q$. For each $Q^{\prime}\in \mathcal{Q}$ with $Q^{\prime}\cap S=\emptyset $, there is $P^{\prime}\in \mathcal{P}$ such that $\sqrt{Q^{\prime}}\subset P^{\prime}$, i.e. $\sqrt{Q^{\prime}}\in \mathcal{P}$. Thus, $(I:P^{\infty})\cap Q\supset IK[X]_S\cap K[X]$ and $(I:P^{\infty})\cap Q=IK[X]_S\cap K[X].$
		By Lemma \ref{satcoloninv}, $IK[X]_S\cap K[X]$ is saturated quotient invariant of $I$. 

		Show $(B)$ implies $(A)$. By Lemma \ref{satcoloninv}, there is a multiplicatively closed set $S$ such that $(I:P^{\infty})\cap Q=IK[X]_S\cap K[X]$. 
		Let $\mathcal{Q}$ be a primary decomposition of $I$. We know $IK[X]_S\cap K[X]=\bigcap_{Q^{\prime}\in \mathcal{Q},Q^{\prime}\cap S=\emptyset } Q^{\prime}$. By the assumption, $Q\not \supset (I:P^{\infty})$ and thus $(I:P^{\infty})\cap Q$ has a $P$-primary component. Then neither $\bigcap_{Q^{\prime}\in \mathcal{Q},Q^{\prime}\cap S\neq \emptyset } Q^{\prime}$ nor $(I:P^{\infty})$ has a $P$-primary component. Hence, 
		\begin{center}
		$
		I=(I:P^{\infty})\cap Q\cap \bigcap_{Q^{\prime}\in \mathcal{Q},Q^{\prime}\cap S\neq \emptyset } Q^{\prime}=\bigcap_{Q^{\prime}\in \mathcal{Q},P\not \subset \sqrt{Q^{\prime}}}Q^{\prime} \cap Q \cap \bigcap_{Q^{\prime}\in \mathcal{Q},Q^{\prime}\cap S\neq \emptyset } Q^{\prime}
		$
		\end{center}
	 	is a primary decomposition and $Q$ is its $P$-primary component. 
	\end{proof}
	
\subsection{Other Criteria for Primary Component}

	Next, we propose criteria for primary components having special properties which can be applied for particular prime divisors. These criteria may be computed more easily than the general one. 

	\subsubsection{Criterion for Isolated Primary Component:}
	
	If $Q$ is a primary ideal whose radical is an isolated divisor $P$ of an ideal $I$, then we don't need to compute $(I:P^{\infty})$ since the $P$-primary component of $I$ is the localization of $I$ by $P$.  
	
	\begin{Theorem}[Criterion 2] \label{criiso-1}
	Let $I$ be an ideal and $P$ an isolated prime divisor of $I$. For a $P$-primary ideal $Q$, the following conditions are equivalent. 
	
		$(A)$ $Q$ is the isolated $P$-primary component of $I$. 
		
		$(B)$ $(I:(I:Q)^{\infty})=Q$.
	\end{Theorem}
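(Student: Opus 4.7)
The plan is to reduce both implications to Lemma \ref{satcoloninv}, which characterizes saturated quotient invariance as being a contraction of a localization of $I$, combined with the observation that because $P$ is an isolated prime divisor, the contraction $IK[X]_P\cap K[X]$ coincides with the unique isolated $P$-primary component $Q_P$ of $I$ (immediate from Lemma \ref{isomul} applied to the isolated singleton $\mathcal{P}=\{P\}$).

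For $(A)\Rightarrow(B)$, once $Q=Q_P$ is written as $IK[X]_P\cap K[X]$, Lemma \ref{satcoloninv} yields $(I:(I:Q)^{\infty})=Q$ with no further work.

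For $(B)\Rightarrow(A)$, I would fix a primary decomposition $\mathcal{Q}$ of $I$ and prove $Q=Q_P$ by two inclusions. For $Q\subset Q_P$, Proposition \ref{satcolon}~(1) expands
\[
Q=(I:(I:Q)^{\infty})=\bigcap_{Q'\in\mathcal{Q},\,Q\subset IK[X]_{\sqrt{Q'}}\cap K[X]} Q'.
\]
Taking radicals gives $P$ as an intersection of the primes $\sqrt{Q'}$ appearing on the right, and the Prime Avoidance Lemma \ref{primeavoid}(ii) then forces $\sqrt{Q'}=P$ for some $Q'$ in that family; isolation of $P$ identifies this $Q'$ as $Q_P$, so $Q_P$ lies in the intersection and hence $Q\subset Q_P$. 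For the reverse inclusion $Q_P\subset Q$, observe that the hypothesis immediately gives $I\subset Q$; for any $f\in Q_P=IK[X]_P\cap K[X]$, pick $s\notin P$ with $sf\in I\subset Q$, and since $Q$ is $P$-primary and $s\notin\sqrt{Q}=P$, we conclude $f\in Q$. The only technical step I anticipate is the Prime Avoidance argument that locates $Q_P$ inside the saturated quotient expansion of $Q$; once that is secured, both inclusions become essentially one-line arguments.
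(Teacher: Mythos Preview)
Your proof is correct. For $(A)\Rightarrow(B)$ you and the paper proceed identically, writing $Q=IK[X]_P\cap K[X]$ and invoking Lemma~\ref{satcoloninv}. For $(B)\Rightarrow(A)$ the routes diverge: the paper stays with Lemma~\ref{satcoloninv} and uses its converse direction to produce a multiplicatively closed set $S$ with $Q=IK[X]_S\cap K[X]$; since the induced primary decomposition $\{Q'\in\mathcal{Q}:Q'\cap S=\emptyset\}$ is irredundant and its intersection is the primary ideal $Q$, that family must be the singleton $\{Q_P\}$, and one is done in one line. You instead bypass Lemma~\ref{satcoloninv} entirely (despite announcing it in your plan), expanding $Q$ via Proposition~\ref{satcolon}(1), extracting $Q_P$ from the intersection by Prime Avoidance to get $Q\subset Q_P$, and then arguing $Q_P\subset Q$ elementwise from $I\subset Q$. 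Your approach is more explicit and self-contained---it does not rely on the nontrivial $(B)\Rightarrow(A)$ half of Lemma~\ref{satcoloninv}, whose proof already hides a similar isolated-set argument---while the paper's version is terser but leans on that earlier packaging. Both are valid; your two-inclusion argument would also serve as an independent, perhaps more transparent, proof of the special case at hand.
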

	
	\begin{proof}
		Show $(A)$ implies $(B)$. Let $S=K[X]\setminus P$. By Lemma \ref{satcoloninv}, $Q=IK[X]_S\cap K[X]$ is saturated quotient invariant of $I$ and thus $(I:(I:Q)^{\infty})=Q$. Next, we show $(B)$ implies $(A)$. By Lemma \ref{satcoloninv}, there is a multiplicatively closed set $S$ s.t. $IK[X]_S\cap K[X]=Q$. Since $Q$ is primary, $IK[X]_S\cap K[X]$ is the isolated $P$-primary component. 
	\end{proof}
	
	\subsubsection{Criterion for Maximal Primary Component:} Each isolated prime divisor is minimal in $\Ass (I)$. On the contrary, we consider "maximal prime divisor" and propose the following criterion for it. 
	
	\begin{Definition}
		Let $P$ be a prime divisor of $I$. We say $P$ is {\rm maximal} if there is no prime divisor $P^{\prime}$ of $I$ containing $P$ properly.  
	\end{Definition}
	
	\begin{Theorem}[Criterion 3]
		Let $I$ be an ideal and $P$ a maximal prime divisor of $I$. For $P$-primary ideal $Q$, the following conditions are equivalent. 

		$(A)$ $Q$ is a $P$-primary component of $I$. 
		
		$(B)$ $(I:P^{\infty})\cap Q=I$.
	\end{Theorem}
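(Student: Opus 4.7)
The plan is to exploit maximality of $P$ to identify $(I:P^{\infty})$ with the intersection of all primary components of $I$ whose radicals differ from $P$, and then, for the harder direction, to build the required primary decomposition by substituting $Q$ for the existing $P$-primary component.

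First I would establish the key reduction: for any primary decomposition $\mathcal{Q}$ of $I$, Lemma \ref{cri2} gives $(I:P^{\infty})=\bigcap_{Q'\in \mathcal{Q},\,P\not\subset \sqrt{Q'}} Q'$, and the maximality of $P$ forces the condition $P\not\subset \sqrt{Q'}$ to be equivalent to $\sqrt{Q'}\neq P$ (since $\sqrt{Q'}\supsetneq P$ would contradict maximality). Hence
\[
(I:P^{\infty}) \;=\; \bigcap_{Q'\in \mathcal{Q},\,\sqrt{Q'}\neq P} Q'.
\]

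The direction (A)$\Rightarrow$(B) is then immediate: choose a primary decomposition $\mathcal{Q}$ with $Q\in \mathcal{Q}$; by irredundancy $Q$ is the unique $P$-primary component, so $I=Q\cap \bigcap_{Q'\neq Q} Q'=Q\cap (I:P^{\infty})$.

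For (B)$\Rightarrow$(A), assume $(I:P^{\infty})\cap Q=I$. Fix any primary decomposition $\mathcal{Q}$ of $I$ and let $Q_0$ be its (unique) $P$-primary component. Define $\mathcal{Q}^{*}=\{Q\}\cup (\mathcal{Q}\setminus\{Q_0\})$; by the key reduction, $\bigcap_{Q'\in \mathcal{Q}^{*}} Q'=Q\cap (I:P^{\infty})=I$, so $\mathcal{Q}^{*}$ is a general primary decomposition. It remains to verify irredundancy: (i) $Q$ cannot contain $(I:P^{\infty})$, for otherwise $I=(I:P^{\infty})$ and the key reduction would express $I$ without a $P$-primary component, contradicting $P\in \Ass(I)$; (ii) for each $Q'\in \mathcal{Q}$ with $\sqrt{Q'}\neq P$, if $Q'$ contained the intersection of the remaining members of $\mathcal{Q}^{*}$, then, using maximality of $P$ again to guarantee $\sqrt{Q'}\not\supset P$ and hence $Q\not\subset \sqrt{Q'}$, Lemma \ref{primaryavoid} would strip $Q$ from this intersection and yield $\bigcap_{Q''\in \mathcal{Q},\,Q''\neq Q'} Q''\subset Q'$, contradicting irredundancy of $\mathcal{Q}$.

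The main obstacle is clause (ii): one must convert a hypothetical redundancy in the \emph{new} decomposition into a redundancy in the \emph{old} one, and this conversion is precisely where Lemma \ref{primaryavoid} together with maximality of $P$ does the work, since without maximality one could not guarantee $Q\not\subset \sqrt{Q'}$ for the non-$P$ primary components of $\mathcal{Q}$.
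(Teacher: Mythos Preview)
Your proof is correct and follows essentially the same line as the paper. The only difference is cosmetic: for (B)$\Rightarrow$(A) the paper starts from a primary decomposition $\mathcal{Q}'$ of $(I:P^{\infty})$ and adjoins $Q$, whereas you start from a primary decomposition $\mathcal{Q}$ of $I$ and replace its $P$-component $Q_0$ by $Q$; since maximality of $P$ gives $\mathcal{Q}\setminus\{Q_0\}$ as a primary decomposition of $(I:P^{\infty})$, the two constructions coincide. Your treatment is in fact more careful, as you spell out the irredundancy check (via Lemma~\ref{primaryavoid}) that the paper leaves implicit.
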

	\begin{proof}
		Show $(A)$ implies $(B)$. Let $\mathcal{Q}$ be a primary decomposition of $I$ with $Q\in \mathcal{Q}$. Since $P$ is maximal in $\Ass (I)$, $(I:P^{\infty})=\bigcap_{Q^{\prime}\in \mathcal{Q},\sqrt{Q^{\prime}}\not \supset P} Q^{\prime}=\bigcap_{Q^{\prime}\in \mathcal{Q},Q^{\prime}\neq Q} Q^{\prime}$. Thus, $(I:P^{\infty})\cap Q=\bigcap_{Q^{\prime}\in \mathcal{Q},Q^{\prime}\neq Q} Q^{\prime} \cap Q=I$. Next, we show  $(B)$ implies $(A)$. Let $\mathcal{Q}^{\prime}$ be a primary decomposition of $(I:P^{\infty})$. Since $\mathcal{Q}^{\prime}$ does not have $P$-primary component, $\mathcal{Q}^{\prime}\cup \{Q\}$ is a primary decomposition of $I$. 
	\end{proof}	
	
	\subsubsection{Criterion for Another General Primary Component:}  The general case can be reduced to maximal case via localization by maximal independent set (See \cite{greuel2002singular} the definition of maximal independent and its computation). Letting $S =K[U]^{\times}= K[U] \setminus \{0\}$, we obtain the following as a special case of Lemma \ref{prilocal}.
	
	\begin{Theorem}[Criterion 4] \label{maxlocal}
		Let $I$ be an ideal and $P$ a prime divisor of $I$.  If $U$ is a maximal independent set of $P$ in $X$ and $Q$ is a $P$-primary ideal , then the following conditions are equivalent. 
		
			$(A)$ $Q$ is a primary component of $I$.
			
			$(B)$ $Q$ is a primary component of $IK[X]_{K[U]^{\times}}\cap K[X]$.
	\end{Theorem}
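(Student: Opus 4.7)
The plan is to recognize this statement as a direct instance of Lemma \ref{prilocal} with the specific choice of multiplicatively closed set $S := K[U]^{\times} = K[U] \setminus \{0\}$. To invoke that lemma I only need to check two things: (i) $S$ is a multiplicatively closed set not containing $0$, and (ii) $S \cap P = \emptyset$. Once both are verified, the equivalence of (A) and (B) is exactly the conclusion of Lemma \ref{prilocal} applied to $I$, the prime divisor $P$, the $P$-primary ideal $Q$, and this choice of $S$.

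Condition (i) is immediate: $K[U]$ is an integral domain (a polynomial ring over a field), so the product of two nonzero elements of $K[U]$ is again a nonzero element of $K[U]$, and clearly $0 \notin S$. Condition (ii) is precisely the defining property of a \emph{maximal independent set} of $P$ in the sense of \cite{greuel2002singular}: $U$ being independent modulo $P$ means $P \cap K[U] = \{0\}$, which is equivalent to $P \cap K[U]^{\times} = \emptyset$. (The maximality of $U$ plays no role in the present theorem; any independent set would suffice. It becomes relevant only later, when $U$ is used to actually compute the localization as mentioned in the introduction.)

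There is essentially no obstacle here: the statement is a clean corollary, and the only subtlety is making explicit that the definition of (maximal) independent set is exactly what turns $K[U]^{\times}$ into an admissible multiplicatively closed set disjoint from $P$. Thus the entire proof reduces to a single line invoking Lemma \ref{prilocal}.
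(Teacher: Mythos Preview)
Your proposal is correct and follows exactly the paper's approach: the paper states this theorem as a special case of Lemma~\ref{prilocal} with $S=K[U]^{\times}$, and you have simply made explicit the verification that $P\cap K[U]^{\times}=\emptyset$ (which is the definition of $U$ being independent modulo $P$). Your remark that maximality of $U$ is not needed for the equivalence is also accurate.
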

		
		\subsection{Additional Criterion for Prime Divisor}
	
	Here, we add a criterion for prime divisor based on the third saturated quotient. 
	
	\begin{Theorem}[Criterion 5] \label{DIPC-2}
		Let $I$ be an ideal and $P$ a prime ideal. Then, the following conditions are equivalent. 
			
			$(A)$ $P\in \Ass (I)$.
			
			$(B)$ $P\supset (I:(I:P))$.
			
			$(C)$ $P\supset (I:(I:P^{\infty}))$.
	\end{Theorem}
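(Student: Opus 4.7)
The plan is to reduce $(A) \Leftrightarrow (C)$ to the same Prime-Avoidance argument that already established Corollary \ref{DIPC} for $(A) \Leftrightarrow (B)$, using Proposition \ref{radthird} to compute the radical of the third saturated quotient. Since $(A) \Leftrightarrow (B)$ is precisely Corollary \ref{DIPC}, only the equivalence $(A) \Leftrightarrow (C)$ requires fresh work.

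First I would observe that because $P$ is prime (hence radical), the containment $P \supset (I:(I:P^{\infty}))$ holds if and only if $P \supset \sqrt{(I:(I:P^{\infty}))}$. Applying Proposition \ref{radthird} with $J = P$, the latter radical equals $\bigcap_{P' \in \Ass(I),\, P \subset P'} P'$, so condition $(C)$ rewrites as
\[
P \supset \bigcap_{P' \in \Ass(I),\, P \subset P'} P'.
\]

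For $(C) \Rightarrow (A)$, I would invoke Lemma \ref{primeavoid}(ii): a prime containing a finite intersection of ideals must contain one of them, so $P \supset P'$ for some $P' \in \Ass(I)$ with $P \subset P'$; combining the two containments forces $P = P' \in \Ass(I)$. For $(A) \Rightarrow (C)$, if $P \in \Ass(I)$ then $P$ itself appears as a factor of the intersection, so the intersection is automatically contained in $P$.

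There is no substantial obstacle here: the one non-elementary input is the radical formula of Proposition \ref{radthird}, and once that is granted the proof collapses to the same two-line Prime-Avoidance argument used for Corollary \ref{DIPC}. If anything, the only point worth being careful about is that the intersection in Proposition \ref{radthird} is finite (so that Lemma \ref{primeavoid}(ii) genuinely applies), which is guaranteed because $\Ass(I)$ is finite for ideals in a Noetherian ring.
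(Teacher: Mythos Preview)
Your proposal is correct and follows essentially the same approach as the paper: invoke Corollary \ref{DIPC} for $(A)\Leftrightarrow(B)$, then use Proposition \ref{radthird} to identify $\sqrt{(I:(I:P^{\infty}))}$ with $\bigcap_{P'\in\Ass(I),\,P\subset P'}P'$ and repeat the Prime-Avoidance argument from Corollary \ref{DIPC} for $(A)\Leftrightarrow(C)$. The paper merely compresses your last two paragraphs into the phrase ``proved by the similar way of Corollary \ref{DIPC},'' whereas you spell out the details.
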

	
	\begin{proof}
		By Corollary \ref{DIPC}, $(A)$ is equivalent to $(B)$. By Proposition \ref{radthird}, \\
		$\sqrt{(I:(I:P))}=\sqrt{(I:(I:P^{\infty}))}=\bigcap_{P^{\prime}\in \Ass (I),P\subset P^{\prime}} P^{\prime}$. Thus, equivalence between $(A)$ and $(C)$ is proved by the similar way of Corollary \ref{DIPC}. 
	\end{proof}
	
	Next, we devise criteria for isolated prime divisor based on the second saturated quotient. 
	
		\begin{Lemma}\label{satpseudo}
		Let $I$ be an ideal and $P$ an isolated prime divisor of $I$. If $\overline{Q}$ is the $P$-pseudo-primary component of $I$, then $(I:(I:P^{\infty})^{\infty})=\overline{Q}$.
	\end{Lemma}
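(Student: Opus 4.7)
The plan is to combine the formula for the second saturated quotient from Proposition~\ref{satcolon}(2) with the explicit description of the pseudo-primary component from Lemma~\ref{isomul}, and then show that the two index sets describing the resulting intersections coincide.

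First I would fix a primary decomposition $\mathcal{Q}$ of $I$ and apply Proposition~\ref{satcolon}(2) with $J = P$ to obtain
\[
(I:(I:P^{\infty})^{\infty}) \;=\; \bigcap_{Q \in \mathcal{Q},\; P \subset \sqrt{IK[X]_{\sqrt{Q}}\cap K[X]}} Q.
\]
Next I would unpack the pseudo-primary component: setting $\mathcal{P}=\{P'\in \Ass(I)\mid P \text{ is the unique isolated prime divisor contained in }P'\}$, a quick check (if $P''\in\Ass(I)$ with $P''\subset P'\in\mathcal{P}$, then every isolated prime divisor inside $P''$ lies inside $P'$ and hence equals $P$) shows $\mathcal{P}$ is isolated. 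Then Lemma~\ref{isomul} gives
\[
\overline{Q} \;=\; \bigcap_{Q\in\mathcal{Q},\; \sqrt{Q}\in \mathcal{P}} Q.
\]

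The core step is to verify that for each $Q\in \mathcal{Q}$ the two conditions ``$P \subset \sqrt{IK[X]_{\sqrt{Q}}\cap K[X]}$'' and ``$\sqrt{Q}\in\mathcal{P}$'' are equivalent. Writing $IK[X]_{\sqrt{Q}}\cap K[X]=\bigcap_{Q'\in\mathcal{Q},\sqrt{Q'}\subset \sqrt{Q}} Q'$ and taking radicals, the first condition becomes $P\subset \bigcap_{\sqrt{Q'}\subset \sqrt{Q}} \sqrt{Q'}$, i.e.\ $P\subset \sqrt{Q'}$ for every prime divisor $\sqrt{Q'}\subset \sqrt{Q}$ (here Prime Avoidance, Lemma~\ref{primeavoid}, is not strictly needed since the condition is already an equality of containments, but it shows there is nothing subtle in passing between the intersection and the individual primes). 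Now I would use the key fact that any prime divisor $\sqrt{Q'}\subset \sqrt{Q}$ contains some isolated prime divisor $P_0$, and $P_0\subset \sqrt{Q'}\subset \sqrt{Q}$: if the first condition holds then $P\subset P_0$, and minimality of $P_0$ forces $P=P_0$, so $P$ is the only isolated prime divisor inside $\sqrt{Q}$, i.e.\ $\sqrt{Q}\in\mathcal{P}$; conversely, if $\sqrt{Q}\in\mathcal{P}$ then for every $\sqrt{Q'}\subset \sqrt{Q}$ the isolated prime divisor $P_0$ below $\sqrt{Q'}$ must equal $P$, giving $P\subset \sqrt{Q'}$.

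The only real obstacle is this last equivalence, and specifically the asymmetry between ``$P$ is contained in every prime divisor below $\sqrt{Q}$'' and ``$P$ is the \emph{unique} isolated prime divisor below $\sqrt{Q}$''; the bridge is the existence of some isolated prime divisor below each prime divisor of $I$ together with the minimality of isolated prime divisors, which makes the two formulations collapse. Once this is in place, combining the two intersections yields $(I:(I:P^{\infty})^{\infty})=\overline{Q}$.
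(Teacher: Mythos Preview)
Your proposal is correct and follows essentially the same route as the paper's proof: apply Proposition~\ref{satcolon}(2), then show for each $Q\in\mathcal{Q}$ that $P\subset\sqrt{IK[X]_{\sqrt{Q}}\cap K[X]}$ is equivalent to $P$ being the unique isolated prime divisor inside $\sqrt{Q}$. The paper argues the forward direction by contradiction (assume another isolated $P'\subset\sqrt{Q}$, deduce $P\subset P'$, contradict minimality) and leaves the converse as ``easy''; you phrase the forward direction via the existence of an isolated $P_0$ beneath each prime divisor and spell out the converse explicitly, and you also make explicit the use of Lemma~\ref{isomul} to describe $\overline{Q}$, but these are expository rather than mathematical differences.
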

	\begin{proof}
			Let $\mathcal{Q}$ be a primary decomposition of $I$. By Proposition \ref{satcolon} (2), 
			\begin{center}
			$(I:(I:P^{\infty})^{\infty})=\bigcap_{Q\in \mathcal{Q},P\subset \sqrt{IK[X]_{\sqrt{Q}}\cap K[X]}} Q.$
			\end{center}
		Thus it is enough to show that the following statements are equivalent for each $Q\in \mathcal{Q}$. 
			
			(1-a) $P\subset \sqrt{IK[X]_{\sqrt{Q}}\cap K[X]}$.
			
			(1-b) $P$ is the unique isolated prime divisor which is contained in $\sqrt{Q}$.\\
		Show (1-a) implies (1-b). As $ \sqrt{IK[X]_{\sqrt{Q}}\cap K[X]}\subset \sqrt{Q}$, we know $P\subset \sqrt{Q}$. Then, suppose there is another isolated prime divisor $P^{\prime}$ contained in $\sqrt{Q}$. We obtain
		\[
		 \sqrt{IK[X]_{\sqrt{Q}}\cap K[X]}=\bigcap_{Q^{\prime}\in \mathcal{Q},Q^{\prime}\subset \sqrt{Q}} \sqrt{Q^{\prime}}\subset P^{\prime}.
		\]
		However, this implies $P\subset P^{\prime}$ and contradicts that $P^{\prime}$ is isolated. It is easy to prove that (1-b) implies (1-a). 
	\end{proof}
	\begin{Theorem}[Criterion 6] \label{eqiso}
		Let $I$ be an ideal and $P$ a prime ideal containing $I$. Then, the following conditions are equivalent. 

			$(A)$ $P$ is an isolated prime divisor of $I$.
			
			$(B)$ $(I:(I:P^{\infty})^{\infty})\neq K[X]$.
	\end{Theorem}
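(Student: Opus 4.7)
The plan is to leverage the two ingredients already assembled just before this theorem: the explicit formula for the second saturated quotient given in Proposition \ref{satcolon}(2), and the identification of that quotient with the pseudo-primary component given in Lemma \ref{satpseudo}. These two tools essentially translate the abstract equivalence into a purely combinatorial statement about which $Q\in\mathcal{Q}$ survive the intersection, where $\mathcal{Q}$ is a fixed primary decomposition of $I$.

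For the direction $(A)\Rightarrow(B)$, I would simply invoke Lemma \ref{satpseudo}: if $P$ is an isolated prime divisor of $I$, then $(I:(I:P^\infty)^\infty)$ equals the $P$-pseudo-primary component $\overline{Q}$ of $I$. By definition $\overline{Q}$ is a $P$-pseudo-primary ideal, so $\sqrt{\overline{Q}}=P$ is proper and hence $\overline{Q}\neq K[X]$, giving $(B)$.

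For the direction $(B)\Rightarrow(A)$, the idea is to show that the non-triviality of the second saturated quotient forces $P$ to coincide with some isolated prime divisor of $I$. Using Proposition \ref{satcolon}(2), the assumption $(B)$ guarantees the existence of some $Q\in\mathcal{Q}$ with $P\subset\sqrt{IK[X]_{\sqrt{Q}}\cap K[X]}$. Writing $\sqrt{IK[X]_{\sqrt{Q}}\cap K[X]}=\bigcap_{Q'\in\mathcal{Q},\,\sqrt{Q'}\subset\sqrt{Q}}\sqrt{Q'}$ and picking an isolated prime divisor $P_0$ of $I$ contained in $\sqrt{Q}$ (which exists because every prime containing $I$ contains a minimal prime of $I$), the corresponding $\sqrt{Q'}=P_0$ appears in the intersection, so $P\subset P_0$. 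On the other hand, the hypothesis $P\supset I$ lets me pick an isolated prime divisor $P_1$ of $I$ with $P_1\subset P$. Then $P_1\subset P\subset P_0$ with $P_0,P_1$ both minimal primes of $I$, so minimality forces $P_1=P=P_0$, i.e., $P\in\Ass_{iso}(I)$.

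The main (only real) obstacle is the $(B)\Rightarrow(A)$ direction, specifically the step that upgrades ``$P$ lies between two isolated primes of $I$'' to ``$P$ is itself one of them.'' This is handled by the squeeze argument above, which crucially uses both the given hypothesis $P\supset I$ (to produce $P_1$ inside $P$) and the consequence drawn from $(B)$ via Proposition \ref{satcolon}(2) (to produce an isolated $P_0$ above $P$). Without the assumption $P\supset I$ this implication would fail, which is why the theorem is stated under that hypothesis.
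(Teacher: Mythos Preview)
Your proof is correct and follows essentially the same route as the paper: both directions invoke Lemma~\ref{satpseudo} for $(A)\Rightarrow(B)$ and Proposition~\ref{satcolon}(2) for $(B)\Rightarrow(A)$. The only cosmetic difference is in the endgame of $(B)\Rightarrow(A)$: the paper observes directly that $I\subset P\subset P'$ with $P'$ a \emph{minimal} prime over $I$ forces $P=P'$, whereas you introduce a second isolated prime $P_1\subset P$ and squeeze via $P_1\subset P\subset P_0$; your extra step is harmless but unnecessary.
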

	
	\begin{proof}
		Show $(A)$ implies $(B)$. By Lemma \ref{satpseudo}, $(I:(I:P^{\infty})^{\infty})=\overline{Q}\neq K[X]$. Show $(B)$ implies $(A)$. By Proposition \ref{satcolon} (2), 
		\begin{center}
		$(I:(I:P^{\infty})^{\infty})=\bigcap_{Q\in \mathcal{Q},P\subset \sqrt{IK[X]_{\sqrt{Q}}\cap K[X]}} Q\neq K[X]$
		\end{center}
		for a primary decomposition $\mathcal{Q}$ of $I$. Then, there is an isolated prime divisor $P^{\prime}$ containing $P$. Since $\sqrt{I}\subset P\subset P^{\prime}$ and $P^{\prime}$ is isolated, this implies $P=P^{\prime}$ is isolated. 
	\end{proof}
	
	Since each prime divisor of $I$ contains $I$, Theorem \ref{eqiso} directly induces the
following.
	
	\begin{Corollary}[Criterion 7]
		Let $I$ be an ideal and $P$ a prime divisor of $I$. Then, 
			
			{\rm (i)} $P$ is isolated if $(I:(I:P^{\infty})^{\infty})\neq K[X]$,
		
			{\rm (ii)} $P$ is embedded if $(I:(I:P^{\infty})^{\infty})=K[X]$.
	\end{Corollary}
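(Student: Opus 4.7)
The plan is to derive this essentially as an immediate consequence of Theorem \ref{eqiso} (Criterion 6). The key observation that makes the reduction work is that every prime divisor $P$ of $I$ automatically contains $I$: indeed, if $P = \sqrt{Q}$ for a primary component $Q$ of $I$, then $I \subset Q \subset \sqrt{Q} = P$. Thus the hypothesis ``$P$ is a prime ideal containing $I$'' required by Criterion 6 is satisfied, and the criterion becomes applicable verbatim.

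For part (i), I would simply cite the ``$(B) \Rightarrow (A)$'' direction of Theorem \ref{eqiso}: if $(I:(I:P^{\infty})^{\infty}) \neq K[X]$, then $P$ is an isolated prime divisor of $I$. Since we already know $P \in \Ass(I)$, this is exactly the isolated conclusion we want.

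For part (ii), I would argue by contrapositive within the dichotomy of prime divisors. Since $P \in \Ass(I)$, $P$ is either isolated or embedded, by definition. If $(I:(I:P^{\infty})^{\infty}) = K[X]$, then by the ``$(A) \Rightarrow (B)$'' direction of Theorem \ref{eqiso} (or equivalently its contrapositive), $P$ cannot be isolated; hence $P$ must be embedded.

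There is essentially no obstacle here — the content has already been done in Criterion 6, and the only substantive remark to make is that a prime divisor always contains the ideal, which licenses the application of that theorem. The corollary's purpose is mainly to repackage Criterion 6 in the form most useful for the Local Primary Algorithm, where one already knows $P \in \Ass(I)$ and needs to classify it as isolated or embedded.
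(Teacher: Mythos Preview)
Your proposal is correct and matches the paper's approach exactly: the paper simply remarks that since each prime divisor of $I$ contains $I$, Theorem \ref{eqiso} directly induces the corollary, and gives no further proof. Your additional observation that a prime divisor is either isolated or embedded (making (ii) the contrapositive within that dichotomy) just makes explicit what the paper leaves implicit.
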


\section{Local Primary Algorithm}
In this section, we devise Local Primary Algorithm (LPA) which computes $P$-primary component of $I$. Our method applies different procedures for two cases; isolated and embedded. Algorithm 1 shows the outline of LPA. Its termination comes from Proposition \ref{hullpm}. We remark that, for given prime divisors disjoint from a multiplicatively closed set $S$, we can compute all primary components disjoint from $S$ by LPA. Then their intersection gives the localization by $S$. 
	
	\subsection{Generating Primary Component}
	
	First, we introduce several ways to generate primary component through
equidimensional hull computation.
	\begin{Proposition}[\cite{Eisenbud1992}, Section 4. \cite{matzat2012algorithmic}, Remark 10]  \label{hullpm}
	Let $I$ be an ideal and $P$ a prime divisor of $I$. For any positive integer m, $I+P^m$ is P-hull-primary, and for a sufficiently large integer $m$, $\hull (I+P^m)$ is a $P$-primary component appearing in a primary decomposition of $I$. 
	\end{Proposition}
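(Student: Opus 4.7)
The plan splits the statement into its two assertions and handles both by localizing at $P$.

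For the first assertion, I would start by observing that every prime containing $I+P^m$ also contains $P^m$, and therefore contains $P$. Hence every associated prime of $I+P^m$ contains $P$, and has dimension at most $\dim(K[X]/P)$, with equality only for $P$ itself. What remains is to verify that $P\in\Ass(I+P^m)$ and that it is isolated. For this, localize at $P$: the extension $(I+P^m)K[X]_P=IK[X]_P+(PK[X]_P)^m$ contains $(PK[X]_P)^m$, so its radical equals the maximal ideal $PK[X]_P$. An ideal whose radical is a maximal ideal is primary, so $(I+P^m)K[X]_P$ is $PK[X]_P$-primary, with $PK[X]_P$ as its unique associated prime. By the standard correspondence of associated primes under localization, $P$ is the unique associated prime of $I+P^m$ contained in $P$; combined with the first observation, $P$ is the unique associated prime of $I+P^m$ of maximal dimension, hence isolated. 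Therefore $\hull(I+P^m)$ coincides with the isolated $P$-primary component of $I+P^m$, proving $I+P^m$ is $P$-hull-primary.

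For the second assertion, fix a primary decomposition $I=Q_P\cap J$ in which $Q_P$ is $P$-primary and $J$ is the intersection of the remaining primary components. Since $\sqrt{Q_P}=P$ and $Q_P$ is finitely generated, there exists $m_0$ with $P^{m_0}\subset Q_P$. For $m\geq m_0$ we have $I+P^m\subset Q_P+P^m=Q_P$, so localizing at $P$ and contracting gives
\[
\hull(I+P^m)=(I+P^m)K[X]_P\cap K[X]\subset Q_PK[X]_P\cap K[X]=Q_P,
\]
where the final equality uses that $Q_P$ is $P$-primary. Intersecting with $J$ yields $\hull(I+P^m)\cap J\subset Q_P\cap J=I$, while the reverse containment is automatic. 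Hence $I=\hull(I+P^m)\cap J$ is a primary decomposition of $I$ whose $P$-primary component is exactly $\hull(I+P^m)$ (after pruning any redundant $Q_i$ if required, noting that $\hull(I+P^m)$ itself cannot be redundant, since otherwise $J=I\subset Q_P$, contradicting the irredundancy of the original decomposition).

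The main obstacle is Part 1, specifically the recognition that $(I+P^m)K[X]_P$ is primary in the local ring because its radical equals the maximal ideal $PK[X]_P$; once this is in hand, the correspondence of associated primes delivers isolated-ness of $P$ in $\Ass(I+P^m)$, and Part 2 becomes a straightforward manipulation using that some power of $P$ eventually lies inside any fixed $P$-primary component.
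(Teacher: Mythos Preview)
The paper does not supply its own proof of this proposition; it is quoted from \cite{Eisenbud1992} and \cite{matzat2012algorithmic}. Your argument is correct. For the first assertion, note that the paper's own machinery affords a shorter route: since $I\subset P$ (as $P\in\Ass(I)$) and $P^m\subset I+P^m\subset P$, one has $\sqrt{I+P^m}=P$, so $I+P^m$ is $P$-pseudo-primary (Definition~\ref{pseudoprimary}) and hence $P$-hull-primary by Remark~\ref{hull-prima-rmk}; the paper uses exactly this one-line reasoning in the proof of Lemma~\ref{pm} for the variant $I+P_G^{[m]}$. Your localization argument is longer but has the advantage of explicitly identifying $\hull(I+P^m)=(I+P^m)K[X]_P\cap K[X]$, which you then exploit in the second part. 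For that second assertion the paper offers nothing beyond the citation, and your approach---choosing $m_0$ with $P^{m_0}\subset Q_P$, deducing $\hull(I+P^m)\subset Q_P$, intersecting with $J$, and pruning if necessary---is the standard one and is sound.
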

	
	We can use Criteria for Primary Component to check $m$ is large enough or not. If $P$ is an isolated prime divisor, then the component is computed directly by using the second saturated quotient. By Lemma \ref{psuedo-primary-component} and Lemma \ref{satpseudo}, we obtain the following theorem. 
	
	\begin{Theorem} \label{thm-p}
	Let $I$ be an ideal and $P$ an isolated prime divisor of $I$. Then
	\[
	\hull ((I:(I:P^{\infty})^{\infty}))
	\]
	is the isolated $P$-primary component of $I$. 
	\end{Theorem}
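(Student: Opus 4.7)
The plan is simply to chain the two preparatory results Lemma \ref{satpseudo} and Lemma \ref{psuedo-primary-component}, since together they already do all the work. First I would invoke Lemma \ref{satpseudo}, which, under the hypothesis that $P$ is an isolated prime divisor of $I$, identifies the second saturated quotient $(I:(I:P^{\infty})^{\infty})$ with the $P$-pseudo-primary component $\overline{Q}$ of $I$. This is the only nontrivial ingredient, but it is already proved in the excerpt via Proposition \ref{satcolon}(2).

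Next, I would apply Lemma \ref{psuedo-primary-component} to $\overline{Q}$. That lemma, again using the isolatedness of $P$, states that $\overline{Q}$ is $P$-hull-primary and that $\hull(\overline{Q})$ is the isolated $P$-primary component of $I$. Substituting the equality from the first step yields
\[
\hull\bigl((I:(I:P^{\infty})^{\infty})\bigr) \;=\; \hull(\overline{Q}),
\]
which by Lemma \ref{psuedo-primary-component} is the isolated $P$-primary component of $I$, as desired.

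There is essentially no obstacle here beyond checking that the hypothesis of $P$ being an isolated prime divisor of $I$ is precisely the shared hypothesis of both lemmas; once that is observed, the theorem follows by a one-line substitution. In writing it up I would keep the proof to two sentences, citing Lemma \ref{satpseudo} for the identification of the second saturated quotient and Lemma \ref{psuedo-primary-component} for the fact that taking the equidimensional hull of the pseudo-primary component recovers the isolated primary component.
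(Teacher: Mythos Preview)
Your proposal is correct and matches the paper's own justification exactly: the paper introduces Theorem~\ref{thm-p} with the sentence ``By Lemma~\ref{psuedo-primary-component} and Lemma~\ref{satpseudo}, we obtain the following theorem,'' and gives no further proof. Your two-step chaining of these lemmas is precisely the intended argument.
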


	\begin{algorithm}[H]
		\begin{algorithmic}[1]
			\REQUIRE{$I$: an ideal, $P$: a prime ideal}
			\ENSURE $
				\begin{cases}
					\text{a $P$-primary component of $I$ if $P$ is a prime divisor of $I$}\\
					\text{"$P$ is not a prime divisor" otherwise}
				\end{cases}
				$
			\IF{$P$ is a prime divisor of $I$ ({\bf Criterion 5})} 
				\IF{$P$ is isolated ({\bf Criteria 6,7})} 
						\STATE $\overline{Q}\gets $ the $P$-pseudo-primary component of $I$  ({\bf Lemma  \ref{satpseudo}})
						\STATE $Q\gets \hull (\overline{Q})$ ({\bf Theorem \ref{thm-p}})
					\RETURN $Q$ is the isolated $P$ primary component
				\ELSE 
					\STATE $m\gets 1$
					\WHILE{$Q$ is not primary component of $I$ ({\bf Criteria 1,3,4})}
						\STATE $\overline{Q}\gets $ a $P$-hull-primary ideal related to $m$ ({\bf Proposition \ref{hullpm}, Lemma \ref{pm}})
						\STATE $Q\gets \hull (\overline{Q})$
						\STATE{$m\gets m+1$}
					\ENDWHILE
					\RETURN $Q$ is an embedded $P$-primary component
				\ENDIF
				
			\ELSE
				\RETURN "$P$ is not a prime divisor"
			\ENDIF	
		\end{algorithmic}
		\caption{General Frame of Local Primary Algorithm}
	\end{algorithm}
	
	\subsection{Techniques for Improving LPA}
	
	We introduce practical technique for implement LPA.  
	
	\subsection{Another Way of Generating Primary Component}
	
	Let $G=\{f_1,\ldots,f_r\}$ be a generator of $P$. Usually we take $\{f_1^{e_1}f_2^{e_2}\cdots f_r^{e_r}\mid e_1+\cdots +e_r=m\}$ as a generator of $P^m$ for a positive integer $m$. However, this generator has $\frac{(r+m-1)!}{(r-1)!m!}$ elements and it becomes difficult to compute $\hull (I+P^m)$ when $m$ becomes large. To avoid the explosion of the number of the generator, we can use $P_G^{[m]}=(f_1^m,\ldots,f_r^m)$ instead. 
	
	\begin{Lemma} \label{minpri}
		Let $\mathcal{Q}$ be a primary decomposition of $I$ and $Q\in \mathcal{Q}$. If $\sqrt{Q}$-hull-primary ideal $Q^{\prime}$ satisfies $I\subset Q^{\prime}\subset Q$, 
		then $(\mathcal{Q}\setminus \{Q\})\cup \{\hull (Q^{\prime})\}$ is another primary decomposition of $I$. 
	\end{Lemma}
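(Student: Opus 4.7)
The plan is to verify the three defining requirements of a primary decomposition for $\mathcal{Q}_{\mathrm{new}}:=(\mathcal{Q}\setminus\{Q\})\cup\{\hull(Q^{\prime})\}$: every member is primary, the intersection over all members equals $I$, and the decomposition is irredundant.

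First, since $Q^{\prime}$ is $\sqrt{Q}$-hull-primary, $\hull(Q^{\prime})$ is by definition a $\sqrt{Q}$-primary ideal, so every element of $\mathcal{Q}_{\mathrm{new}}$ is primary. Next I would establish the chain
\[
I \,\subset\, Q^{\prime} \,\subset\, \hull(Q^{\prime}) \,\subset\, Q,
\]
where the first inclusion is the hypothesis, the middle one holds because $\hull(Q^{\prime})$ is the intersection of only the top-dimensional primary components of $Q^{\prime}$ and so contains $Q^{\prime}$, and the last follows from Lemma \ref{hull-primary subset} applied to the $\sqrt{Q}$-hull-primary ideal $Q^{\prime}$ and the $\sqrt{Q}$-primary ideal $Q$. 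From this chain, the intersection
\[
\bigcap_{Q''\in\mathcal{Q},\,Q''\neq Q} Q'' \,\cap\, \hull(Q^{\prime})
\]
contains $I$ (since $I$ is contained in every factor) and is contained in $\bigcap_{Q''\in\mathcal{Q},\,Q''\neq Q} Q'' \,\cap\, Q = I$ (using $\hull(Q^{\prime}) \subset Q$), so it equals $I$.

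For irredundancy, observe that $\sqrt{\hull(Q^{\prime})} = \sqrt{Q}$ coincides with the radical of the component it replaces, so the set of radicals of $\mathcal{Q}_{\mathrm{new}}$ equals the set of radicals of $\mathcal{Q}$, namely $\Ass(I)$ with each prime divisor appearing exactly once. Since any irredundant primary decomposition of $I$ has exactly $|\Ass(I)|$ components, and since $\Ass(I)$ must appear as a subset of the radicals of any general primary decomposition (by the uniqueness of associated primes), deleting any single component from $\mathcal{Q}_{\mathrm{new}}$ would leave strictly fewer than $|\Ass(I)|$ radicals, an impossibility. Hence $\mathcal{Q}_{\mathrm{new}}$ is irredundant.

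The principal obstacle is this irredundancy step: because $\hull(Q^{\prime})$ can be strictly smaller than $Q$, one cannot directly port the original irredundancy inequality $\bigcap_{Q''\in\mathcal{Q},\,Q''\neq Q} Q'' \not\subset Q$ into the analogous inequality for each $Q''$ in $\mathcal{Q}_{\mathrm{new}}$. Bypassing this case analysis by appealing to the uniqueness of $\Ass(I)$ is the cleanest route.
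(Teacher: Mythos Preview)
Your proof is correct and follows essentially the same route as the paper: both establish the chain $I\subset Q'\subset\hull(Q')\subset Q$ via Lemma~\ref{hull-primary subset} and use $\hull(Q')\subset Q$ to conclude that replacing $Q$ by $\hull(Q')$ leaves the intersection equal to $I$. The paper simply asserts irredundancy without argument, whereas you supply a clean justification via the uniqueness of $\Ass(I)$; this extra step is correct and is a welcome clarification rather than a departure in method.
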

	\begin{proof}
		By Lemma \ref{hull-primary subset}, we obtain $I\subset Q^{\prime} \subset \hull (Q^{\prime})\subset Q$. Since $I\cap \hull (Q^{\prime})=I$ and $Q\cap \hull (Q^{\prime})=\hull (Q^{\prime})$, we obtain
		\[
		I=I\cap \hull (Q^{\prime})=\left(\bigcap_{Q^{\prime \prime}\in \mathcal{Q},Q^{\prime \prime}\neq Q}Q^{\prime \prime} \cap Q\right)\cap \hull (Q^{\prime})=\bigcap_{Q^{\prime \prime}\in \mathcal{Q},Q^{\prime \prime}\neq Q}Q^{\prime \prime} \cap \hull (Q^{\prime}).
		\]
		Thus, $(\mathcal{Q}\setminus \{Q\})\cup \{\hull (Q^{\prime})\}$ is an irredundant primary decomposition of $I$. 
	\end{proof}

	\begin{Lemma} \label{pm}
		For any positive integer $m$, $I+P_G^{[m]}$ is $P$-hull-primary, and for a sufficiently large $m$, $\hull (I+P_G^{[m]})$ is a $P$-primary component appearing in a primary decomposition of $I$. 
	\end{Lemma}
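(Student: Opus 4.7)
The plan is to sandwich $P_G^{[m]}$ between ordinary powers of $P$ and then reduce everything to Proposition \ref{hullpm} and Lemma \ref{minpri}. The crucial inclusion is
\[
P^{rm} \subset P_G^{[m]} \subset P^m,
\]
where the right inclusion is immediate from $f_i^m \in P^m$, and the left inclusion is a pigeonhole count: any generating monomial $f_1^{e_1}\cdots f_r^{e_r}$ of $P^{rm}$ has $e_1+\cdots+e_r=rm$, so some $e_i \geq m$, and then the monomial lies in $(f_i^m) \subset P_G^{[m]}$. Adding $I$ throughout and taking radicals, and using that $\sqrt{I+P^k}=P$ for every $k \geq 1$ (which holds since $I \subset P$ implies $I+P^k \subset P$ while $P^k \subset I+P^k$), I obtain $\sqrt{I+P_G^{[m]}} = P$.

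From $\sqrt{I+P_G^{[m]}} = P$, the ideal $I+P_G^{[m]}$ is $P$-pseudo-primary by Definition \ref{pseudoprimary}, hence hull-primary by Remark \ref{hull-prima-rmk}. Moreover, $P$ is the unique minimal associated prime of $I+P_G^{[m]}$, so in any primary decomposition the top-dimensional components all have radical $P$; therefore $\hull(I+P_G^{[m]})$ is a $P$-primary ideal, which is exactly the assertion that $I+P_G^{[m]}$ is $P$-hull-primary.

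For the second part, I fix any primary decomposition $\mathcal{Q}$ of $I$ and a $P$-primary component $Q \in \mathcal{Q}$ (which exists since $P$ is a prime divisor of $I$). For each generator $f_i$ of $P$, since $f_i \in P = \sqrt{Q}$, there is an integer $m_i$ with $f_i^{m_i} \in Q$. Taking $m \geq \max_i m_i$ gives $P_G^{[m]} \subset Q$, and hence $I \subset I+P_G^{[m]} \subset Q$. Since $I+P_G^{[m]}$ is $P$-hull-primary by the first part, Lemma \ref{minpri} applies and produces a primary decomposition of $I$ in which $\hull(I+P_G^{[m]})$ replaces $Q$. The main obstacle is really just the pigeonhole inclusion $P^{rm} \subset P_G^{[m]}$; once that and the resulting radical computation are in hand, the $P$-hull-primary property and the existence of large enough $m$ follow from the lemmas already established.
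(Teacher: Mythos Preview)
Your proof is correct, and for the second part it takes a somewhat different route from the paper.

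For the first part, both you and the paper establish $\sqrt{I+P_G^{[m]}}=P$ and then conclude $P$-hull-primary via Remark~\ref{hull-prima-rmk}. Your pigeonhole inclusion $P^{rm}\subset P_G^{[m]}$ is correct but not actually needed here: $\sqrt{P_G^{[m]}}=P$ is immediate from $f_i\in\sqrt{P_G^{[m]}}$ together with $P_G^{[m]}\subset P$, which is all the paper uses.

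For the second part, the paper first invokes Proposition~\ref{hullpm} to obtain that $\hull(I+P^m)$ is a $P$-primary component for large $m$, then uses the chain $I\subset I+P_G^{[m]}\subset I+P^m\subset \hull(I+P^m)$ and applies Lemma~\ref{minpri} with $Q=\hull(I+P^m)$. You instead fix an arbitrary $P$-primary component $Q$ of $I$, use $f_i\in\sqrt{Q}$ to force $P_G^{[m]}\subset Q$ for large $m$, and apply Lemma~\ref{minpri} directly with that $Q$. Your argument is more self-contained: it never appeals to Proposition~\ref{hullpm}, and indeed the same reasoning with $P^m$ in place of $P_G^{[m]}$ would give an independent proof of Proposition~\ref{hullpm} itself. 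The paper's route, on the other hand, cleanly exhibits Lemma~\ref{pm} as a consequence of the already-established Proposition~\ref{hullpm} via the single inclusion $P_G^{[m]}\subset P^m$.
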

	\begin{proof}
	As $\sqrt{I+P}=\sqrt{I+P_G^{[m]}}=P$, $I+P_G^{[m]}$ is $P$-hull-primary. By Theorem \ref{hullpm}, $\hull (I+P^m)$ is a $P$-primary component of $I$ for a sufficiently large $m$. Since $I\subset I+P_{G}^{[m]}\subset I+P^m\subset \hull (I+P^m)$, $\hull (I+P_{G}^{[m]})$ is a $P$-primary component by Lemma \ref{minpri}. 
	\end{proof}
		
	\subsection{Equidimensional Hull Computation with MIS}
	
	Next, we devise another computation of $\hull (I+P^m)$ based on {\em maximal independent set} (MIS) which is much efficient than computations based on Proposition \ref{comhull}. Similarly, by this technique we can replace $I$ with $IK[X]_{K[U]^\times}\cap K[X]$ 
at the first step of LPA.

	\begin{Lemma} \label{maxhull}
		Let $I$ be a $P$-hull-primary ideal. For a maximal independent set $U$ of $P$, $\hull (I)=IK[X]_{K[U]^{\times}}\cap K[X]$.
	\end{Lemma}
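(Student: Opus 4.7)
The plan is to fix a primary decomposition of $I$ and identify exactly which primary components survive the localization at $S = K[U]^\times$; using the $P$-hull-primary hypothesis together with the defining property of a maximal independent set, those surviving components turn out to be precisely the maximal-dimensional ones, whose intersection is $\hull(I)$ by definition.

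Let $\mathcal{Q}$ be a primary decomposition of $I$. By Remark \ref{rm:locpri},
\[
IK[X]_{K[U]^{\times}} \cap K[X] \;=\; \bigcap_{Q \in \mathcal{Q},\, Q \cap K[U]^{\times} = \emptyset} Q,
\]
so the task reduces to deciding, for each $Q \in \mathcal{Q}$, whether $Q \cap K[U]^{\times} = \emptyset$, i.e., whether $Q \cap K[U] = \{0\}$. Since $Q$ is primary and $K[U]$ is an integral domain, any nonzero $f \in \sqrt{Q} \cap K[U]$ would yield a nonzero power $f^n \in Q \cap K[U]$; hence $Q \cap K[U] = \{0\}$ is equivalent to $\sqrt{Q} \cap K[U] = \{0\}$, which in turn is equivalent to the injectivity of $K[U] \hookrightarrow K[X]/\sqrt{Q}$, and thus to $\dim(K[X]/\sqrt{Q}) \geq |U|$.

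The hypotheses now pin down the surviving components. By the defining property of a maximal independent set, $|U| = \dim(K[X]/P)$. Because $I$ is $P$-hull-primary, $P$ is the unique minimal prime of $I$ attaining $\dim(K[X]/P) = \dim(I)$; moreover every prime divisor $P'$ of $I$ contains some minimal prime, so $\dim(K[X]/P') \leq \dim(K[X]/P) = |U|$. Combining these facts, $Q \cap K[U]^{\times} = \emptyset$ is equivalent to $\dim(K[X]/\sqrt{Q}) = \dim(I)$, that is, to $Q$ being a maximal-dimensional component (and the $P$-primary component itself does survive, since $P \cap K[U] = \{0\}$ by the MIS condition). The intersection of all maximal-dimensional primary components of $I$ is $\hull(I)$ by definition, giving the claim.

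The main technical point is the dimension-theoretic equivalence between $Q \cap K[U]^{\times} = \emptyset$ and $\dim(K[X]/\sqrt{Q}) \geq |U|$; once that standard translation is in hand, the rest is just matching dimensions through the $P$-hull-primary hypothesis.
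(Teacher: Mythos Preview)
Your argument is correct in substance and follows the same route as the paper's (very terse) proof: fix a primary decomposition, determine which components meet $K[U]^{\times}$, and observe that only the $P$-primary one survives. One caution on phrasing: the claimed \emph{equivalence} between $\sqrt{Q}\cap K[U]=\{0\}$ and $\dim(K[X]/\sqrt{Q})\ge |U|$ is false in general (take $\sqrt{Q}=(x)$ in $K[x,y]$ with $U=\{x\}$), but you only use the forward implication, and you handle the converse of the main statement separately via $\sqrt{Q}=P$ and the MIS condition $P\cap K[U]=\{0\}$, so the proof stands.
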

	\begin{proof}
		Let $\mathcal{Q}$ be a primary decomposition of $I$. Then, $\hull (I)$ is the unique primary component disjoint from $K[U]^{\times}$. Thus, 
		\begin{center}
		$IK[X]_{K[U]^{\times}}\cap K[X]=\bigcap_{Q\in \mathcal{Q}, Q\cap K[U]^{\times}=\emptyset } Q=\hull (I).$ 
		\end{center} 
	\end{proof}

\section{Experiments}
	We made a preliminary implementation on a computer algebra system Risa/Asir \cite{risaasir} and apply it to several examples as naive experiments.  Here we show some typical examples. Timings are measured on a PC with Xeon E5-2650 CPU. 
	
	First, we see an ideal whose embedded primary components are hard to compute. Let $I_1(n)=(x^2)\cap (x^4,y)\cap (x^3,y^3,(z+1)^n+1)$. If $n$ is considerable large, it is difficult to compute a full primary decomposition of $I_1(n)$ though the isolated devisor $(x)$ can be detected pretty easily. We apply Local Primary Algorithm (LPA) for this example to compute the isolated primary component for $P_1=(x)$. We also see another example which is more valuable for mathematics. An ideal $A_{k,m,n}$ is defined in \cite{strumfels} and its primary decomposition has important meanings in Computer Algebra for Statistics. We consider an isolated prime divisor $P_2=(x_{13},x_{23},x_{33},x_{43})$ of $A_{3,4,5}$ in $\mathbb{Q}[x_{ij}\mid 1\le i\le 4,1\le j\le 5]$. In Table 1, we can see LPA has certain effectiveness by its speciality comparing a full primary decomposition function noro\_pd.syci\_dec. From Proposition \ref{comhull}, we also use double ideal quotient to compute equidimensional hull.   
	
	\begin{table}[H]
		\begin{center}
			\begin{tabular}{ccccccc}\hline
Algorithm&$I_1(100)$&$I_1(200)$ &$I_1(300)$&$I_1(400)$ &$I_1(500)$&$A_{3,4,5}$/$P_2$ \\\hline
noro\_pd.syci\_dec & 0.36&15.6&88.3&289&96.0&$>$3600\\\hline
LPA & 0.02&0.04&0.07&0.11&0.14 &14.3\\\hline\\
\end{tabular}
			\caption{Local Primary Algorithm (Isolated)} \label{table1}
		\end{center} 
	\end{table}

	Second, we consider embedded prime divisors; {\small $P_3=(x_{12}x_{31}-x_{32}x_{11},x_{42}x_{11}-x_{41}x_{12},x_{42}x_{31}-x_{41}x_{32},x_{44}x_{31}-x_{41}x_{34},x_{44}x_{32}-x_{42}x_{34},x_{13},x_{21},x_{22},x_{23},x_{24},x_{33},x_{43})$} of $A_{2,4,4}$ in $\mathbb{Q}[x_{ij}\mid 1\le i\le 4,1\le j\le 4]$ and {\small $P_4=(x_{16}x_{27}-x_{17}x_{26},$ $ x_{34}x_{13}-x_{33}x_{14},x_{37}x_{16}-x_{36}x_{17},x_{36}x_{27}-x_{37}x_{26},x_{12},x_{15},x_{21},x_{22},x_{23},x_{24},x_{25},x_{32},x_{35})$} of $A_{2,3,7}$ in $\mathbb{Q}[x_{ij}\mid 1\le i\le 3,1\le j\le 7]$. In Table 2, LPA-Pm is an implementation based on Lemma \ref{pm} and LPA-MIS is one from Lemma \ref{maxhull} and Criteria 3, 4. Both methods are implemented in  LPA-(Pm+MIS).  The primitive LPA is not practical since the cost of computing $\hull (I+P^m)$ is much high. On the other hand, we can see LPA-(Pm+MIS) has good effectiveness by its speciality comparing a full primary decomposition function noro\_pd.syci\_dec. 
	
\begin{table}[H]
	\begin{center}
		\begin{tabular}{lcc}\hline
Algorithm& $A_{2,4,4}$/$P_3$& $A_{2,3,7}$/$P_4$\\\hline
noro\_pd.syci\_dec & 3.11 &34.8\\\hline
LPA & $>3600$ &168\\\hline
LPA-Pm & 4.75& 29.1\\\hline
LPA-MIS & 0.58 &0.38\\\hline
LPA-(Pm+MIS) & 0.15& 0.08\\\hline\\
\end{tabular}
		\caption{Local Primary Algorithm (Embedded) and its Improvement} \label{table2}
	\end{center} 
\end{table}

\section{Conclusion and Future Work}
In commutative algebra, the operation of "localization by a prime ideal" is well-known as a basic tool. However, its computation through primary decomposition is much difficult. Thus, we devise a new effective localization {\em Local Primary Algorithm} (LPA) using Double Ideal Quotient(DIQ) and its variants without computing unnecessary primary components for localization. For our construction of LPA, we devise several criteria for primary component based on DIQ and its variants. We take preliminary benchmarks for some examples to examine certain effectiveness of LPA coming from its speciality. To make our LPA very practical we shall continue to improve it through obtaining timing data for a lot of larger examples.

In future work, we are finding a way to compute "sample points" of prime divisors. For localization it does not need all divisors; it is enough to find $f_P\in P\cap S$ for each prime divisor $P$ with $P\cap S\neq \emptyset$ and we obtain $IK[X]_S\cap K[X]=(I:(\prod_{P\cap S\neq \emptyset } f_P)^{\infty})$. Another work is to apply our primary component criteria to {\it probabilistic or inexact} methods for primary decomposition, such as numerical ones. Probabilistic or inexact ways have low computational costs, however, they have low accuracy for outputs. Hence, our criterion using double ideal quotient may help to guarantee their outputs. Finally, localization in general setting, that is localization by a prime ideal not necessary associated is interesting work. 

\medskip
\noindent
{\bf Acknowledgement:}  The authors would like to thank the referees for their helpful comments to improve the presentation of this paper.  The authors are also grateful to Masayuki Noro for technical assistance with the computer experiments and coding on Risa/Asir. 

\bibliographystyle{plain}

\begin{thebibliography}{9}
	\bibitem{atiyah1994introduction} Atiyah, M.F., MacDonald, I.G.: Introduction to Commutative Algebra. Addison-Wesley Series in Mathematics. Avalon Publishing, New York (1994) 
	\bibitem{Eisenbud1992} Eisenbud, D., Huneke, C., Vasconcelos, W.: Direct methods for primary decomposition. Inventi. Math.110 (1), 207-235 (1992)
	\bibitem{GIANNI1988149}Gianni, P., Trager, B., Zacharias, G.: Gr\"obner bases and primary decomposition of polynomial ideals. J. Symb. Comput.6 (2), 149-167 (1988)
	\bibitem{greuel2002singular} Greuel,  G.-M.,  Pfister,  G.:  A  Singular  Introduction  to  Commutative  Algebra. Springer, Heidelberg (2002). 
	\bibitem{KAWAZOE20111158} Kawazoe, T., Noro, M.: Algorithms for computing a primary ideal decomposition without producing intermediate redundant components. J. Symb. Comput.46 (10), 1158-1172 (2011)
	\bibitem{matzat2012algorithmic} Matzat, B.H., Greuel, G.-M., Hiss, G.: Primary decomposition: algorithms and comparisons. In: Matzat, B.H., Greuel, G.M., Hiss, G. (eds.) Algorithmic Algebra and Number Theory, pp. 187-220. Springer, Heidelberg (1999)
	\bibitem{risaasir} The Risa/Asir developing team: Risa/Asir. A computer algebra system. http://www.math.kobe-u.ac.jp/Asir
	\bibitem{SHIMOYAMA1996247} Shimoyama, T., Yokoyama, K.: Localization and primary decomposition of polynomial ideals. J. Symb. Comput.22 (3), 247-277 (1996) 
		\bibitem{strumfels} Sturmfels, B.: Solving systems of polynomial equations. In: CBMS Regional Conference Series. American Mathematical Society, no. 97 (2002)
	\bibitem{vasconcelos2004computational} Vasconcelos, W.: Computational Methods in Commutative Algebra and Algebraic Geometry.  Algorithms  and  Computation  in  Mathematics.  Springer,  Heidelberg (2004)
\end{thebibliography}

\end{document}